\newtheorem{thm}{Theorem}
\newtheorem{lemma}[thm]{Lemma}
\newtheorem{prop}[thm]{Proposition}
\newtheorem{remark}[thm]{Remark}
\newcommand{\R}{\mathbb{R}}
\newcommand{\C}{\mathbb{C}}
\renewcommand\phi{\varphi}
\newcommand{\cS}{\mathcal{S}}
\newcommand{\cN}{\mathcal{N}}
\newcommand{\cD}{\mathcal{D}}
\newcommand{\cH}{\mathcal{H}}
\newcommand{\cL}{\mathcal{L}}
\renewcommand{\geq}{\geqslant}
\renewcommand{\leq}{\leqslant}
\renewcommand{\tilde}{\widetilde}
\newcommand{\be}{\begin{equation}}
\newcommand{\ee}{\end{equation}}
\newcommand{\bq}{\begin{equation}}
\newcommand{\eq}{\end{equation}}
\newcommand{\eps}{\varepsilon}
\title[Weakly localized states for nonlinear Dirac equations]{Weakly localized states for nonlinear Dirac equations}
\author[W. Borrelli]{William Borrelli}
\address{Universit\'e Paris-Dauphine, PSL Research University, CNRS, UMR 7534, CEREMADE, F-75016 Paris, France} 
\email{borrelli@ceremade.dauphine.fr}
\date{\today}
\DeclareRobustCommand{\SkipTocEntry}[5]{}
\begin{document}
%%%%%%%%%%%%%%%%%%%%%%%%%%%%%%%%%%%%%%%%%%%%%%%%%%%%%%%%%%%%%%%%%%%%%%%%%%%%%%
%%%%%%%%%%%%%%%%%%%%%%%%%%%%%%%%%%%%%%%%%%%%%%%%%%%%%%%%%%%%%%%%%%%%%%%%%%%%%%

\begin{abstract}
We prove the existence of infinitely many non square-integrable stationary solutions for a family of massless Dirac equations in 2D. They appear as effective equations in two dimensional honeycomb structures. We give a direct existence proof thanks to a particular radial ansatz, which also allows to provide the exact asymptotic behavior of spinor components. Moreover, those solutions admit a variational characterization as \emph{least action} critical points of a suitable action functional. We also indicate how the content of the present paper allows to extend our previous results for the massive case \cite{shooting} to more general nonlinearities.
\end{abstract}

\maketitle

\tableofcontents

\medskip

\medskip

%\noindent\textbf{Acknowledgments.} 
%I wish to thank \'{E}ric S\'{e}r\'{e} for helpful conversations on his work on Maxwell-Dirac equations and Mathieu Lewin for pointing out useful references.

%%%%%%%%%%%%%%%%%%%%%%%%%%%%%%%%%%%%%%%%%%
\section{Introduction}
The Dirac equation has been widely employed to build relativistic models of particles (see, e.g., the survey paper \cite{els}). Recently, it made its appearance in Condensed Matter Physics in connection with two-dimensional honeycomb structures. They appear, for instance, in the study of new two-dimensional materials possessing Dirac fermions low-energy excitations, the most famous being graphene (see, e.g. \cite{graphene},\cite{introdiracmaterials}). In those materials electrons at the Fermi level have zero apparent mass and can be described using the massless Dirac equation. The laser beam propagation in certain photonic crystals can also be described by honeycomb structures \cite{photoniclattice}. 

More generally, Schr\"{o}dinger operators of the form
$$H=-\Delta+V_{per}(x),\quad x\in\R^{2},$$ 

where $V_{per}\in C^{\infty}(\R^{2},\R)$ is a honeycomb potential (in the sense of \cite{FWhoneycomb}), that is possessing the symmetries to a honeycomb lattice $\Lambda$, generically exhibit conical intersections (the so-called \textit{Dirac points}) in their dispersion bands, as proved in \cite{FWhoneycomb}. This leads to the appearance of Dirac as the effective operator, describing the electron dynamics for wave packets spectrally concentrated around such conical degeneracies. 

More precisely, by the \textit{Bloch-Floquet theory} \cite{reedsimonIV}, the spectrum $\operatorname{Spec}(H)\subseteq\R$ is the union of spectral bands, obtained through the following $k$-pseudoperiodic eigenvalue problem:
\begin{equation}\label{pseudo}
\left\{\begin{aligned}
   H\Phi(y,k)=\mu(k)\Phi(y,k),\quad y\in Y \\
     \Phi(y+v,k)=e^{ik\cdot v}\Phi(y,k),\quad v\in\Lambda
\end{aligned}\right.
\end{equation}
where $k\in Y^{*}$ varies in the \textit{Brillouin zone}, that is the fundamental zone of the dual lattice $\Lambda^{*}$, and $Y\subseteq \Lambda$ is the fundamental zone of the physical lattice.

The eigenvalues given by (\ref{pseudo}) form a sequence
$$\mu_{0}(k)\leq\mu_{1}(k)\leq\mu_{2}(k)\leq ... $$
and the corresponding pseudo-periodic eigenfunctions $\Phi_{n}(\cdot,k)$, called \textit{Bloch functions}. In \cite{FWhoneycomb} Fefferman and Weinstein proved that if $k=K_{*}$ is a vertex of the Brillouin zone, then there exists $m\in\mathbb{N}$ such that the bands $\mu_{n}(\cdot), \mu_{n+1}(\cdot)$ intersect conically at $\mu_{*}:=\mu_{n}(K_{*})=\mu_{n+1}(K_{*})$ and
\be\label{bloch}
\ker(H-\mu_{*})=\operatorname{span}\{\Phi_{1}(x),\Phi_{2}(x)\}.
\ee

More details and precise definitions can be found in \cite{FWhoneycomb}.
 \begin{remark}
Physically, the energy $\mu_{*}$ of the Dirac point is the Fermi level, in the case of graphene. In turns, this corresponds to the zero-energy for the Dirac operator. However, there is no interpretation of the Dirac spectrum in terms of particles/antiparticles. Rather, the positive part of the spectrum corresponds to massive conduction electrons, while the negative one to valence electrons.
\end{remark}
 
The following nonlinear Schr\"{o}dinger/Gross-Pitaevskii (NLS/GP) equation.

\be\label{gp}
i\partial_{t}u=-\Delta u+V(x)u+\kappa\vert u\vert^{2}u,\qquad x\in\R^{2},\kappa\in\R, 
\ee
 describes, in the quantum setting, the dynamics of Bose-Enstein condensates, and $u$ is the wavefunction of the condensate \cite{boseeinstein,derivation}. Here $V(x)$ models a magnetic trap and the nonlinear potential $\kappa\vert u\vert^{2}$ describes a mean-field interaction between particles. The parameter $\kappa$ is the microscopic 2-body scattering length. Another important field of application of NLS/GP is nonlinear optics, namely in the description of electromagnetic interference of beams in photorefractive crystals \cite{nonlinearoptics}. In this case $V(x)$ is determined by the spatial variations of the background linear refractive index of the medium, while the nonlinear potential accounts for the fact that regions of higher electric field intensity have a higher refractive index (the so-called Kerr nonlinear effect). In this case $\kappa<0$ represents the Kerr nonlinearity coefficient. In the latter situation, the variable $t\in\R$ denotes the distance along the direction of propagation and $x\in\R^{2}$ the transverse dimensions.

In the above systems honeycomb structures can be realized and tuned through suitable optical induction techniques based on laser or light beam interference \cite{photoniclattice}. They are encoded in the properties of the periodic potential $V=V_{per}$.

Let $u_{0}(x)=u^{\varepsilon}_{0}(x)$ be a wave packet spectrally concentrated around a Dirac point, that is: 

\be\label{concentrated}
u^{\varepsilon}_{0}(x)=\sqrt{\varepsilon}(\alpha_{0,1}(\varepsilon x)\Phi_{1}(x)+\alpha_{0,2}(\varepsilon x)\Phi_{2}(x))
\ee
where $\Phi_{j}, j=1,2$ are Bloch functions at the Dirac point (\ref{bloch}), and the functions $\alpha_{0,j}$ are some (complex) amplitudes to be determined.

Then one expects the solution to (\ref{gp}) to evolve, at leading order in $\epsilon$, as a modulation of Bloch functions:  

\be\label{approximate}
u^{\varepsilon}(t,x)\underset{\epsilon\rightarrow0^{+}}{\sim}\sqrt{\varepsilon}\left(\alpha_{1}(\varepsilon t,\varepsilon x)\Phi_{1}(x)+\alpha_{2}(\varepsilon t,\varepsilon x)\Phi_{2}(x) + \mathcal{O}(\varepsilon)\right),
\ee 

As explained in \cite{arbunichsparber}, performing a multiscale expansion one obtains the following effective Dirac system for the modulation coefficients $\alpha_{j}$:

\begin{equation}\label{effective}
\left\{\begin{aligned}
    \partial_{t}\alpha_{1}+\overline{\lambda}(\partial_{x_{1}}+i\partial_{x_{2}})\alpha_{2} &=-i\kappa(2\beta_{2}\vert\alpha_{1}\vert^{2}+\beta_{1}\vert\alpha_{2}\vert^{2})\alpha_{1} \\
     \partial_{t}\alpha_{2}+\lambda(\partial_{x_{1}}-i\partial_{x_{2}})\alpha_{1} &=-i\kappa(\beta_{1}\vert\alpha_{1}\vert^{2}+2\beta_{2}\vert\alpha_{2}\vert^{2})\alpha_{2} 
\end{aligned}\right.,
\end{equation}
where 
\be\label{beta}
\beta_{2}:=\int_{Y}\vert\Phi_{1}(x)\vert^{2}\vert\Phi_{2}(x)\vert^{2}dx,\quad\beta_{1}:=\int_{Y}\vert\Phi_{1}(x)\vert^{4}dx=\int_{Y}\vert\Phi_{2}(x)\vert^{4}dx.
\ee
Here $\lambda\in\C\setminus\{0\}$ is a coefficient related to the potential $V$ (see \cite{FWwaves},\cite{FWhoneycomb}). 

We remark that the system \eqref{effective} has been first formally derived by Fefferman and Weinstein in \cite{wavedirac}.

The large, but finite, time-scale validity of the Dirac approximation has been proved in \cite{FWwaves} for the linear case and for Schwartz class intial data (\ref{concentrated}). The case of cubic nonlinearities (\ref{effective}), corresponding to the NLS/GP (\ref{gp}), is treated in \cite{arbunichsparber} for high enough Sobolev regularity $H^{s}(\R^{2})$, with $s>3$.

\begin{remark}
The factor $\sqrt{\epsilon}$ appearing in (\ref{concentrated},\ref{approximate}) is of course irrelevant in the linear case, but it is exactly the critical scaling such that in the cubic case the nonlinearity and the Dirac appear together at first order in the multiscale expansion, as shown in \cite{arbunichsparber}.
\end{remark}

We are interested in studying zero-modes of (\ref{effective}) in the \textit{focusing case} $\kappa <0$, that is, we look for particular stationary solutions of the form
$$\alpha(t,x)=\psi(x),\qquad (t,x)\in\R\times\R^{2}. $$
It will turn out that they are in general weakly localized, as they are not even square-integrable, in contrast to the results mentioned for the evolution problem. We expect those zero-modes to be useful to prove approximation results for stationary solutions to (\ref{gp}), analogous to the ones proved in \cite{FWwaves},\cite{arbunichsparber} for the evolution problem, somehow in the spirit of \cite{ilanweinstein}. However, the absence of a gap at the Dirac point is a serious problem to deal with. This will be the object of a future investigation and will be addressed elsewhere.

It is not restrictive to choose $$ \kappa=-1$$ in (\ref{effective}). This leads to the following system:
\begin{equation}\label{}
\left\{\begin{aligned}
        \overline{\lambda}(\partial_{x_{1}}+i\partial_{x_{2}})\psi_{2} &=i(2\beta_{2}\vert\psi_{1}\vert^{2}+\beta_{1}\vert\psi_{2}\vert^{2})\psi_{1} \\
    \lambda(\partial_{x_{1}}-i\partial_{x_{2}})\psi_{1} &=i(\beta_{1}\vert\psi_{1}\vert^{2}+2\beta_{2}\vert\psi_{2}\vert^{2})\psi_{2} 
\end{aligned}\right.
\end{equation}

Moreover, we can easily get rid of $\lambda\neq 0$. Indeed, setting $$\psi_{1}(x)=\frac{1}{\vert\lambda\vert}\tilde{\psi}_{1}(x),\quad\psi_{2}(x)=\frac{\lambda}{\vert\lambda\vert^{2}}\tilde{\psi}_{2}(x),\qquad x\in\R^{2}$$
and defining $$\tilde{\beta}_{j}:= \frac{\beta_{j}}{\vert\lambda\vert^{3}},\quad j=1,2 ,$$
one ends up (dropping superscripts) with the system:
\begin{equation}\label{diracsystem}
\left\{\begin{aligned}
    (\partial_{x_{1}}+i\partial_{x_{2}})\psi_{2} &=i(2\beta_{2}\vert\psi_{1}\vert^{2}+\beta_{1}\vert\psi_{2}\vert^{2})\psi_{1} \\
         (\partial_{x_{1}}-i\partial_{x_{2}})\psi_{1} &=i(\beta_{1}\vert\psi_{1}\vert^{2}+2\beta_{2}\vert\psi_{2}\vert^{2})\psi_{2}
\end{aligned}\right.
\end{equation}
where $0<\beta_{2}\leq\beta_{1}$. 

For simplicity, we state our main result in terms of equation (\ref{diracsystem}).

\begin{thm}\label{main}
Equation (\ref{diracsystem}) admits infinitely many solutions $\psi\in C^{\infty}(\R^{2},\C^{2})$ of the form 
\be\label{form}
\psi(r,\vartheta)=\begin{pmatrix}  iu(r)e^{i\vartheta}  \\ v(r) \end{pmatrix} 
\ee

with $u,v:[0,+\infty)\longrightarrow \R$, $(r,\vartheta)$ being polar coordinates in $\R^{2}$.

 Moreover, the spinor components satisfy
\be
 u(r)v(r)>0,\qquad\forall r>0,
\ee
and there holds 
\be\label{decay}
\vert u(r)\vert \sim\frac{1}{r},\quad\vert v(r)\vert\sim\frac{1}{r^{2}}, \qquad\mbox{as}\quad r\rightarrow+\infty,
\ee
In particular,
$$ \psi\in L^{p}(\R^{2},\C^{2}),\quad \forall p>2,$$
but
$$\psi\notin L^{2}(\R^{2},\C^{2}). $$
For this reason, we say that those solutions are weakly localized.
\end{thm}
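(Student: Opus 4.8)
The plan is to reduce the PDE system \eqref{diracsystem} to an ODE system via the radial ansatz \eqref{form}, and then to produce infinitely many solutions of this ODE system by a shooting argument, much as in \cite{shooting} for the massive case. First I would substitute \eqref{form} into \eqref{diracsystem}: writing $\partial_{x_1}\pm i\partial_{x_2}$ in polar coordinates as $e^{\pm i\vartheta}(\partial_r \pm \tfrac{i}{r}\partial_\vartheta)$ and using that the angular dependence of $\psi_1$ is $e^{i\vartheta}$ while that of $\psi_2$ is trivial, one sees that both equations become consistent (the $e^{i\vartheta}$ factors cancel) and one is left with a planar first-order autonomous-in-form system for $(u,v)$:
\begin{equation}\label{ode}
\left\{\begin{aligned}
u' + \frac{u}{r} &= -\bigl(\beta_1 u^2 + 2\beta_2 v^2\bigr)\,v,\\
v' &= \bigl(2\beta_2 u^2 + \beta_1 v^2\bigr)\,u,
\end{aligned}\right.
\end{equation}
(signs to be checked against the $i$'s). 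I would record that $r=0$ is a regular singular point, so that smoothness at the origin forces $u(0)=0$; the natural initial condition is then $u(0)=0$, $v(0)=s$ for a shooting parameter $s>0$.

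The key steps, in order, are: (i) local existence, uniqueness and smoothness of the solution $(u(\cdot;s),v(\cdot;s))$ near $r=0$ with the prescribed behavior $u(r)\sim c(s)\,r$, $v(r)\to s$, together with continuous dependence on $s$ — this handles the regular singularity, e.g. by a fixed-point argument on the integrated equations; (ii) global existence for every $s>0$, which I would get from an a priori bound, exploiting a conserved or monotone quantity of \eqref{ode} — here the natural candidate is an energy-type expression such as $\tfrac{d}{dr}$ of a polynomial in $u,v$ producing a Hamiltonian-like structure, or more simply a Lyapunov functional showing $u^2, v^2$ stay bounded and cannot blow up in finite $r$; (iii) the sign property $u(r)v(r)>0$ for $r>0$, obtained by analyzing the vector field: on the set where, say, $v>0$ and $u>0$ the second equation keeps $v$ increasing and the first keeps $u$ from changing sign until a zero of $v$ is reached, and one shows the trajectory spirals, crossing the axes; (iv) the shooting/counting mechanism: define $N(s)$ to be the number of zeros of $u(\cdot;s)$ (or a rotation number of the planar curve $(u,v)$), show $N$ is finite for each $s$, continuous where it is locally constant, and that $N(s)\to\infty$ as $s\to\infty$ (or $s\to 0$) by a scaling/rescaling argument — this is where the infinitude of solutions comes from, each integer value of the rotation number giving (at least) one $s$ for which the trajectory satisfies the correct decay at infinity rather than crossing to the "wrong" branch; (v) the asymptotics \eqref{decay}: linearizing \eqref{ode} at infinity, the dominant balance $v'\approx \beta_1 v^2 u$ together with $u'+u/r\approx 0$ suggests $u\sim a/r$ and then $v'\sim \beta_1 a^2 v /r^2$... one must instead find the self-consistent decay; the correct ansatz $u\sim a/r$, $v\sim b/r^2$ is checked by plugging in: $u'+u/r \sim 0$ to leading order is consistent with the right side being $O(1/r^2)\cdot(1/r^2)$, while $v' \sim -2b/r^3$ must match $2\beta_2 a^2 u \sim 2\beta_2 a^3/r^3$, fixing the relation between $a$ and $b$; (vi) finally, integrability: $|u|\sim 1/r$ gives $|u|^p\in L^1(\R^2)$ iff $p>2$, and $|v|\sim 1/r^2$ is even better, so $\psi\in L^p$ for all $p>2$ but $\psi\notin L^2$ since $\int |u|^2\,dx \sim \int dr/r$ diverges.

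The main obstacle will be step (iv), combined with making step (v) rigorous and uniform: one must show that the rotation number of the shooting trajectory is well-defined, takes all sufficiently large integer values as $s$ ranges over $(0,\infty)$, and that a trajectory with a given rotation number \emph{does} converge to the origin with the sharp rate \eqref{decay} rather than, say, approaching a limit cycle or decaying slower. The massless (gapless) structure removes the exponential decay available in \cite{shooting}, so the standard energy estimates must be replaced by a careful phase-plane / blow-up analysis controlling algebraic decay; closing the loop between "correct rotation number" and "correct asymptotics" — i.e. showing these determine the same discrete set of $s$ values — is the technical heart of the argument. I would handle the transition to infinity by an Emden–Fowler–type change of variables ($t=\log r$, together with a rescaling of $u,v$ by powers of $r$) turning \eqref{ode} into a system with a hyperbolic rest point, whose stable manifold gives precisely the solutions with the advertised decay, and then match this with the shooting count from the origin.
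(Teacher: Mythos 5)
There is a genuine gap, and it sits exactly where you locate the ``technical heart'' of your argument. Your mechanism for producing infinitely many solutions --- a shooting/rotation-number count $N(s)$ of the zeros of $u(\cdot;s)$, with trajectories that ``spiral, crossing the axes'' --- is incompatible with the very statement you are proving: the theorem asserts $u(r)v(r)>0$ for all $r>0$, so $u$ has \emph{no} zeros, the winding number is identically zero, and no discrete family of shooting parameters can be extracted this way. In fact no shooting is needed at all: for this system \emph{every} initial datum $u(0)=0$, $v(0)=\lambda>0$ produces a localized solution, and the infinitude comes for free from the scale invariance \eqref{scaling} of equation \eqref{diracsystem} (equivalently, from the one-parameter family $\lambda>0$). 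The positivity $uv>0$ is not obtained by phase-plane spiraling but by an exact identity: writing $H(u,v)=\tfrac{\beta_1}{4}(u^4+v^4)+\beta_2u^2v^2$ as in \eqref{hamiltonian}, one computes $\tfrac{d}{dr}(ruv)=\beta_1 r(v^4-u^4)=2\tfrac{d}{dr}(r^2H)$, whence $u(r)v(r)=2rH(r)>0$; this single identity replaces your steps (iii) and (iv) and shows the trajectory never leaves the open first quadrant. (Your signs in the reduced ODE also need fixing: the correct system \eqref{system} has $\dot v=-u(\beta_1u^2+2\beta_2v^2)\leq 0$, and the monotonicity of $v$ is used essentially to identify the limit at infinity.)

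Your step (v) correctly guesses the self-consistent rates $u\sim a/r$, $v\sim b/r^2$, but the proposed justification via an Emden--Fowler change of variables and the ``stable manifold of a hyperbolic rest point'' does not work here: the origin is a degenerate equilibrium (the nonlinearity is cubic, the linearization vanishes), and the two components decay at different polynomial rates, so no single rescaling produces an autonomous hyperbolic system. The argument that actually closes this step uses three monotone quantities: the differential inequality $\dot H\geq -4H/r$ gives $H\gtrsim r^{-4}$ by comparison, which together with $uv=2rH$ yields $u^2+v^2\lesssim r^{-1}$; the function $f(r)=ru(r)$ is strictly increasing (since $v>0$), and a contradiction argument excludes $f(r)\to+\infty$, forcing $u\sim 1/r$; feeding this into the equation for $\dot v$ gives $\dot v\sim -r^{-3}$ and hence $v\sim r^{-2}$. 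Your reduction to the ODE, the use of a Hamiltonian-type Lyapunov function for global existence, and the final $L^p$ computation are all sound, but without the identity $uv=2rH$ and the monotonicity of $ru(r)$ the positivity and the sharp asymptotics \eqref{decay} --- and with them the multiplicity mechanism --- are not reachable along the route you describe.
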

\begin{remark}
Heuristically, weak localization is expected as the $L^{2}$-spectrum of the massless Dirac operator is equal to $\R$, as it is easily seen using the Fourier transform (see \cite{diracthaller} for more details). As shown in Theorem \ref{main}, in general stationary solutions in the massless case only exhibit a polynomial decay at infinity. This is in striking contrast with the massive case, where stationary solutions (of arbitrary form) are exponentially localized (see, e.g., \cite{spectralnabilecomech} where the method of \cite{berthiergeorgescu} has been generalized to deal with nonlinear bound states in any dimensions).
\end{remark}
\begin{remark}\label{remarkscaling}
Equation (\ref{diracsystem}) is invariant by scaling. Indeed, it can be easily checked that if $\psi$ is a solution, then the same holds for the rescaled spinor
\be\label{scaling} \psi_{\delta}(\cdot):=\sqrt{\delta}\psi(\delta\cdot),\qquad \forall\delta>0.\ee
Thus it suffices to prove the existence of one (non-trivial) solution, to get multiplicity. Observe also that if $\psi$ solves the equation, then \be\label{odd}\tilde{\psi}(\cdot):=-\psi(\cdot)\ee is another solution.
\end{remark}
\begin{remark}\label{clifford}
Theorem \ref{main} is in some sense suggested by the literature on the spinorial Yamabe problem. A particular family of test spinors is used to study conformal invariants or nonlinear Dirac equations on spin manifolds (see e.g. \cite{Aubin},\cite{Isobecritical} and references therein). It is given by
\begin{equation}\label{family} 
\varphi(y)=f(y)(1-y)\cdot\varphi_{0} \qquad y\in\mathbb{R}^{2}
\end{equation} 
where $\varphi_{0}\in \mathbb{C}^{2}$, $f(y)=\frac{2}{1+\vert y\vert^{2}}$ and the dot represents the Clifford product.

It can be easily checked that they are $\mathring{H}^{\frac{1}{2}}(\R^{2},\C^{2})$-solutions to the following "isotropic" Dirac equation (corresponding to $\beta_{1}=1,\beta_{2}=\frac{1}{2}$)
\begin{equation}\label{conformal}
\mathcal{D}\varphi=\vert\varphi\vert^{2}\varphi
\end{equation} 
The spin structure of euclidean spaces is quite explicit and the spinors in (\ref{family}) can be rewritten in matrix notation as 
$$\varphi(y)=f(y)(\mathds{1}_{2}+iy_{1}\sigma_{1}+iy_{2}\sigma_{2})\cdot\varphi_{0} \qquad y\in\mathbb{R}^{2}$$
$\mathds{1}_{2}$ and $\sigma_{i}$ being the identity and the Pauli matrices, respectively. See \cite{Jost} for more details. One can show that (\ref{family}) is of the form (\ref{form}) and has the decay properties stated in Theorem \ref{main}. 
\end{remark}

The present paper is organized as follows. In (Section \ref{dynamical}) we prove (Theorem \ref{main}), exploiting a particular radial ansatz. The proof follows by direct dynamical systems arguments. Then we show in (Section \ref{variational}) that the solutions found in the first part of the paper admit a variational characterization. This is done using duality, combined with standard concentration compactness theory and Nehari manifold arguments. The last section (Section \ref{massivesection}) is devoted to some remarks concerning the massive case, where we quickly explain how the results of the present paper allow to extend those of \cite{shooting}.

\noindent\textbf{Acknowledgment.} 
I wish to thank Michael I. Weinstein for his encouragement.

%%%%%%%%%%%%%%%%%%%%%%%%%%%%%%%%%%%%%%%%%%%%%%%%%%%%%%%%%%%%%%%%%%%%%%%

%%%%%%%%%%%%%%%%%%%%%%%%%%%%%%%%%%%%%%%%%%%%%%%%%%%%%%%%%%%%%%%%%%%%%%%
\section{Existence and asymptotics}\label{dynamical}
In this section we prove Theorem (\ref{main}), providing the existence and the exact asymptotic behavior of (non-trivial) solutions of (\ref{diracsystem}) satisfying the ansatz (\ref{form}). The latter allows us to convert the PDE (\ref{diracsystem}) into a dynamical system. Indeed, passing to polar coordinates in $\R^{2}$, $(x_{1},x_{2})\mapsto ( r,\vartheta)$, the equation reads as:

\begin{equation}\label{polar}
\left\{\begin{aligned}
 -e^{i\vartheta}\left(i\partial_{r}-\frac{\partial_{\vartheta}}{r}\right)\psi_{2} &=-\left(2\beta_{2}\vert\psi_{1}\vert^{2}+\beta_{1}\vert\psi_{2}\vert^{2}\right)\psi_{1} ,
\\
-e^{-i\vartheta}\left(i\partial_{r}+\frac{\partial_{\vartheta}}{r}\right)\psi_{1} &=\left(\beta_{1}\vert\psi_{1}\vert^{2}+2\beta_{2}\vert\psi_{2}\vert^{2}\right)\psi_{2}.
\end{aligned}\right.
\end{equation}
Plugging the ansatz
\be
\psi(r,\vartheta)=\begin{pmatrix}  iu(r)e^{i\vartheta}  \\ v(r) \end{pmatrix} 
\ee
into (\ref{polar}) gives:
\begin{equation}\label{system}
\left\{\begin{aligned}
    \dot{u}+\frac{u}{r} &=v(2\beta_{2}u^{2}+\beta_{1}v^{2}) \\ 
   \dot{v}&=-u(\beta_{1}u^{2}+2\beta_{2}v^{2})
\end{aligned}\right.
\end{equation}
Thus we are lead to study the flow of the above system. 

In particular, since we are looking for localized states, we are interested in solutions to (\ref{system}) such that $$(u(r),v(r))\longrightarrow(0,0)\qquad \mbox{as} \qquad r\rightarrow +\infty$$

In order to avoid singularities and to get non-trivial solutions, we choose as initial conditions 
\be\label{initial}
u(0)=0\quad,\quad v(0)=\lambda\neq 0
\ee
Moreover, the symmetry of the system allows us to consider only the case $\lambda>0$. Thus (Theorem \ref{main}) reduces to the following

\begin{prop}\label{asymptotic}
For any $\lambda>0$ there exists a unique solution $$(u_{\lambda},v_{\lambda})\in C^{\infty}([0,+\infty),\R^{2})$$ of the Cauchy problem (\ref{system},\ref{initial}).

Moreover, there holds
\be
u_{\lambda}(r), v_{\lambda}(r)>0,\qquad\forall r>0,
\ee
and 
\be
u_{\lambda}(r)\sim\frac{1}{r},\quad v_{\lambda}(r)\sim\frac{1}{r^{2}}, \qquad\mbox{as}\quad r\rightarrow+\infty,
\ee
In particular,
$$ \psi\in L^{p}(\R^{2},\C^{2}),\quad \forall p>2,$$
but
$$\psi\notin L^{2}(\R^{2},\C^{2}). $$
\end{prop}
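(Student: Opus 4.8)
The plan is to study the ODE system \eqref{system} as a flow in the $(u,v)$ plane, starting from \eqref{initial} with $\lambda>0$. First I would establish local existence, uniqueness and smoothness: the right-hand side of \eqref{system} is smooth in $(u,v)$, but the term $u/r$ is singular at $r=0$, so I would rewrite the first equation as $\frac{d}{dr}(ru)=rv(2\beta_2u^2+\beta_1v^2)$, or equivalently treat the system via the substitution that removes the singularity, and apply the standard Cauchy--Lipschitz theorem on $(0,\infty)$ together with a fixed-point/Picard argument near $r=0$ to get a $C^1$ (hence, by bootstrapping, $C^\infty$) solution with $u(0)=0$, $v(0)=\lambda$. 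From the first equation at $r=0$ one reads $\dot u(0)=0$ is not forced; rather $2\dot u(0)=\beta_1\lambda^3$, so $u$ becomes positive immediately and $\dot v(0)=0$.

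Next I would derive a conserved-type quantity. Multiplying the first equation of \eqref{system} by something like $ru$ and the second by $rv$ (or looking for a Hamiltonian/energy), one should find a monotone or conserved functional controlling the trajectory; a natural candidate is an expression of the form $E(r) = $ (a combination of $u^2, v^2$ and quartic terms) whose $r$-derivative has a sign coming from the $u/r$ dissipative-like term. Concretely, I expect that $\frac{d}{dr}\big(\tfrac12 v^2 + \text{something}\big)$ picks up a definite-sign contribution $-u^2/r\cdot(\cdots)$, which forces the trajectory to spiral/decay toward the origin and in particular shows it cannot blow up in finite time, giving global existence on $[0,+\infty)$. Along the way I would prove positivity: as long as the solution stays in the first quadrant, $\dot v<0$ so $v$ decreases, and one checks the trajectory cannot cross the axes (using uniqueness and the explicit form of the vector field on the axes) — this yields $u_\lambda(r),v_\lambda(r)>0$ for all $r>0$ and $(u,v)\to(0,0)$.

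For the precise asymptotics I would analyze the system for large $r$, where the nonlinear terms are lower order. Linearizing, the dominant balance in the first equation is $\dot u + u/r \approx 0$, giving $u\sim c/r$; substituting into the second equation, $\dot v \approx -\beta_1 u^3 \sim -\beta_1 c^3/r^3$, which integrates to $v\sim (\beta_1 c^3/2)/r^2$. The work is to justify this rigorously: I would introduce scaled variables, e.g. $p(r)=ru(r)$ and $q(r)=r^2 v(r)$ (or logarithmic time $s=\log r$), reducing to an autonomous-in-the-limit system whose equilibria give the asymptotic constants, and then use a stable-manifold / Gronwall argument to show the actual solution converges to the predicted profile with the stated rates. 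One must also pin down that the limiting constant $c$ is nonzero — this follows because $v$ stays positive and the decay of $u$ cannot be faster than $1/r$ without contradicting the equation for $v$ (or from the conserved quantity). The $L^p$ statements are then immediate: $|u|^p r\, dr \sim r^{1-p}dr$ and $|v|^p r\,dr \sim r^{1-2p}dr$ are integrable at infinity iff $p>2$ (the $u$ component is the borderline one), and both are integrable near $0$, while $p=2$ fails for $u$.

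The main obstacle, I expect, is the rigorous asymptotic analysis: proving not just that $(u,v)\to 0$ but that it does so at exactly the rates $1/r$ and $1/r^2$ with nonzero constants. This requires controlling the nonlinear feedback between the two components over the whole half-line — a soft compactness argument gives decay but not the sharp rate, so one genuinely needs the change of variables to an autonomous system plus a hyperbolicity/center-manifold argument, and care that the quartic nonlinear corrections are integrable perturbations of the linear part. The global existence and positivity, by contrast, should follow relatively cleanly once the right monotone quantity is identified.
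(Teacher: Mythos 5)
Your overall architecture (integral equation near $r=0$, a monotone energy for global existence, positivity, then asymptotics) matches the paper's, and the local/global existence parts are fine: the paper uses exactly the integral reformulation and the Hamiltonian $H(u,v)=\tfrac{\beta_1}{4}(u^4+v^4)+\beta_2u^2v^2$, which satisfies $\dot H=-\tfrac{u^2}{r}(\beta_1u^2+2\beta_2v^2)\leq0$ and confines the trajectory to a compact sublevel set. However, your positivity argument has a genuine gap. The first quadrant is \emph{not} forward invariant by an axis-barrier argument: on the positive $u$-axis ($v=0$, $u>0$) the vector field gives $\dot v=-\beta_1u^3<0$, so the flow points strictly \emph{out} of the quadrant there, and uniqueness is of no help since the axes are not invariant sets. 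Since $\dot v<0$ whenever $u>0$, nothing in your argument prevents $v$ from reaching zero in finite $r$ and turning negative (this is precisely what happens for some initial data in the massive system of \cite{shooting}). The paper's resolution is the identity $u_\lambda(r)v_\lambda(r)=2rH_\lambda(r)$, obtained by checking that $\tfrac{d}{dr}(ruv)=\beta_1r(v^4-u^4)=2\,\tfrac{d}{dr}(r^2H)$ and integrating from $0$; positive definiteness of $H$ then forces $uv>0$ for all $r>0$. This identity is the linchpin of the whole proposition and is absent from your plan.

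The same identity also drives the sharp asymptotics, where your route diverges from (and is weaker than) the paper's. The paper first gets $r^{-2}\lesssim u^2+v^2\lesssim r^{-1}$ — the lower bound from $\dot H\geq-4H/r$ and comparison, the upper bound from $2uv\leq u^2+v^2$ combined with $uv=2rH\sim r(u^2+v^2)^2$ — and then observes $f(r)=ru(r)$ is strictly increasing, so $f\to l\in(0,+\infty]$; the case $l=+\infty$ is excluded by the bootstrap $u\geq1/r\Rightarrow\dot v\lesssim-r^{-3}\Rightarrow v\lesssim r^{-2}\Rightarrow\int rv(u^2+v^2)\,dr<\infty$, whence $u\sim 1/r$ and then $v\sim1/r^2$. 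Your proposed stable-manifold analysis in variables $(ru,r^2v)$ could in principle work, but as written it neither supplies the a priori two-sided bounds needed to control the nonlinear feedback nor rules out $ru\to+\infty$, and your justification that the limiting constant is nonzero is only heuristic. I would recommend proving the identity $uv=2rH$ first; positivity, the upper decay bound, and the nondegeneracy of the limit of $ru$ all fall out of it.
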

 \begin{figure}[!h]
        \centering
        \includegraphics[scale=.5]{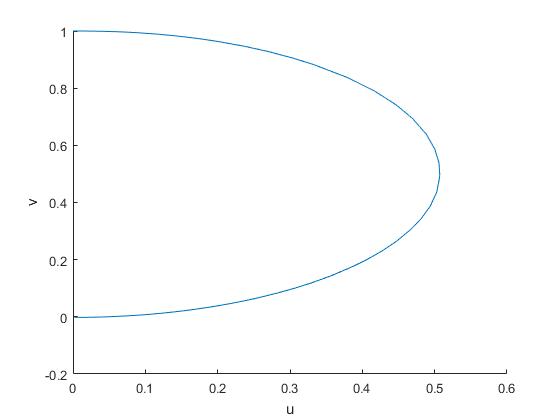}
 \caption{The trajectory of a representative solution of (\ref{system}) with $\lambda>0$.}
 \label{figura}
   \end{figure}

The proof of (Prop. \ref{asymptotic}) will be achieved in several intermediate steps.

Local existence and uniqueness of solutions of (\ref{system}) are guaranteed by the following
\begin{lemma}
\label{existence}
Let $\lambda>0$. There exist $0<R_{\lambda}\leq+\infty$ and $(u,v)\in C^{1}([0,R_{\lambda}),\mathbb{R}^{2})$ unique maximal solution to (\ref{system}), which depends continuously on $\lambda$ and uniformly on $[0,R]$ for any $0<R<R_{\lambda}$.
\end{lemma}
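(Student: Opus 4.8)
The plan is to recognize \eqref{system} as a first-order ODE system whose right-hand side is polynomial, hence smooth, in $(u,v)$, but which has an apparent singularity at $r=0$ coming from the term $u/r$. The strategy is therefore to split the analysis into a neighborhood of the origin, where the singularity must be resolved, and the complement $[\delta, R_\lambda)$, where the standard Cauchy--Lipschitz theorem applies directly.

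First I would treat the behaviour near $r=0$. Writing the first equation as $\dot u + u/r = f(u,v)$ with $f$ smooth, the integrating factor $r$ gives $\frac{d}{dr}(ru) = r\,f(u,v)$, so a solution with $u(0)=0$ must satisfy the integral system
\begin{equation}\label{integral}
\left\{\begin{aligned}
u(r) &= \frac{1}{r}\int_0^r s\,\bigl(v(2\beta_2 u^2+\beta_1 v^2)\bigr)(s)\,ds,\\
v(r) &= \lambda - \int_0^r u\bigl(\beta_1 u^2 + 2\beta_2 v^2\bigr)(s)\,ds.
\end{aligned}\right.
\end{equation}
I would then set up a contraction mapping argument for \eqref{integral} on the Banach space $C([0,\rho],\R^2)$ with the sup norm, for $\rho=\rho(\lambda)$ small enough, using the local Lipschitz bounds on the polynomial nonlinearities together with the elementary estimate $\bigl|\frac{1}{r}\int_0^r s\,g(s)\,ds - \frac{1}{r}\int_0^r s\,\tilde g(s)\,ds\bigr|\le \tfrac12\|g-\tilde g\|_\infty$ to control the weighted-average operator. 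This yields a unique continuous solution on $[0,\rho]$; bootstrapping the integral equation then shows $u,v\in C^1$ near $0$ (and in fact $C^\infty$, by repeatedly differentiating \eqref{system}, since the right-hand sides are smooth and $u(r)/r$ extends smoothly), with $u(r)=O(r)$ as $r\to 0$.

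Once a $C^1$ solution exists on some $[0,\rho]$, I would extend it: for $r\ge\rho$ the system \eqref{system} has a smooth (locally Lipschitz) right-hand side in $(u,v)$ with no singularity, so the classical ODE existence/uniqueness and extension theory gives a unique maximal solution on a maximal interval $[0,R_\lambda)$ with $0<R_\lambda\le+\infty$, which blows up in norm if $R_\lambda<\infty$. Uniqueness globally on $[0,R_\lambda)$ follows by combining uniqueness in the contraction argument near $0$ with uniqueness away from $0$. Continuous dependence on $\lambda$, uniformly on compact subintervals $[0,R]\subset[0,R_\lambda)$, follows from the standard continuous-dependence theorem on $[\rho,R]$ together with the fact that the fixed-point construction on $[0,\rho]$ depends continuously (indeed Lipschitz-continuously) on the parameter $\lambda$, which enters only through the constant term in the second equation of \eqref{integral}; one must check that $\rho$ can be chosen uniform for $\lambda$ in a compact set, which is immediate since the Lipschitz constants depend only on an a priori bound for $(u,v)$ on $[0,\rho]$.

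The main obstacle is purely the coordinate singularity at $r=0$: the vector field is not Lipschitz (not even bounded) there, so Picard--Lindelöf cannot be invoked directly, and one has to pass through the integral reformulation \eqref{integral} and verify that the averaging operator $g\mapsto \frac1r\int_0^r s\,g(s)\,ds$ is a contraction-friendly bounded operator on $C([0,\rho])$. Everything else—smoothness, maximal extension, uniqueness, continuous dependence—is then routine ODE theory applied on intervals bounded away from the origin.
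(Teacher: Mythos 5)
Your proposal is correct and follows essentially the same route as the paper: rewriting \eqref{system} in the integral form \eqref{integral} to absorb the $u/r$ singularity and applying a contraction mapping argument, with the extension, uniqueness, and continuous dependence away from the origin handled by standard ODE theory. The paper's proof is just a condensed version of what you wrote (deferring details to \cite{cv}), so no further comparison is needed.
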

\begin{proof}
We can rewrite the system in integral form as
\begin{equation}\label{integral}
\left\{\begin{aligned}
   u(r) &= \frac{1}{r}\int^{r} _{0}sv(s)(2\beta_{2}u^{2}(s)+\beta_{1}v^{2}(s))ds \\ 
   v(r)&= \lambda-\int^{r}_{0}u(s)(\beta_{1}u^{2}(s)+2\beta_{2}v^{2}(s))ds
\end{aligned}\right.
\end{equation}
where the r.h.s. is a Lipschitz continuous function with $(u,v)\in C^{1}$. Then the claim follows by a contraction mapping argument, as in \cite{cv}.
\end{proof}

Given $\lambda>0$, we will denote by $(u_{\lambda},v_{\lambda})$ the corresponding (maximal) solution. Dropping the singular term in (\ref{system}) we obtain a hamiltonian system

 \begin{equation}\label{hamsyst}
\left\{\begin{aligned}
    \dot{u} &=v(2\beta_{2}u^{2}+\beta_{1}v^{2}) \\ 
   \dot{v}&=-u(\beta_{1}u^{2}+2\beta_{2}v^{2})
\end{aligned}\right.
\end{equation}
whose hamiltonian is given by

\be\label{hamiltonian}
H(u,v)=\frac{\beta_{1}}{4}(u^{4}+v^{4})+\beta_{2}u^{2}v^{2}
\ee
Consider 
\be
H_{\lambda}(r):=H(u_{\lambda}(r),v_{\lambda}(r))
\ee
then a simple computation gives
\be\label{deriveH}
\dot{H_{\lambda}}(r)=-\frac{u_{\lambda}(r)}{r}(\beta_{1}u^{2}_{\lambda}(r)+2\beta_{2}v^{2}_{\lambda}(r))\leq0
\ee
so that the energy $H$ is non-increasing along the solutions of (\ref{system}). 

This implies that $\forall r\in[0,R_{x})$, $(u_{\lambda}(r),v_{\lambda}(r))\in \{H(u,v)\leq H(0,\lambda)\}$, the latter being a compact set. Thus there holds
\begin{lemma}
Every solution to (\ref{system}) is global.
\end{lemma}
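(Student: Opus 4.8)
The plan is to argue by contradiction, combining the a priori bound furnished by the monotonicity of $H$ with the standard continuation principle for ODEs. Let $(u_\lambda,v_\lambda)$ denote a maximal solution, defined on $[0,R_\lambda)$ as in Lemma \ref{existence}, and suppose $R_\lambda<+\infty$. By \eqref{deriveH} the function $H_\lambda$ is non-increasing, hence $(u_\lambda(r),v_\lambda(r))$ lies in the sublevel set $\mathcal{K}:=\{(u,v):H(u,v)\leq H(0,\lambda)\}$ for all $r\in[0,R_\lambda)$. The first thing I would check is that $\mathcal{K}$ is compact: since $0<\beta_2\leq\beta_1$ one has the pointwise bound $H(u,v)\geq\tfrac{\beta_1}{4}(u^{4}+v^{4})$, so $H$ is continuous and coercive and its sublevel sets are closed and bounded. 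In particular the trajectory stays in a fixed compact set, so $(u_\lambda,v_\lambda)$ is bounded on $[0,R_\lambda)$.

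Next I would use this bound to exclude finite-time blow-up. Fix any $\varepsilon\in(0,R_\lambda)$; on $[\varepsilon,R_\lambda)\times\mathcal{K}$ the right-hand side of \eqref{system} is smooth — here it is essential that $R_\lambda>0$, so the singular term $u/r$ is bounded on this region — hence $\dot u_\lambda$ and $\dot v_\lambda$ are bounded near $R_\lambda$. Consequently $(u_\lambda(r),v_\lambda(r))$ is Lipschitz near $R_\lambda$ and converges to some limit $(u^{*},v^{*})\in\mathcal{K}$ as $r\to R_\lambda^{-}$. Applying the local existence and uniqueness statement of Lemma \ref{existence} (in the form with initial time $R_\lambda$ and initial datum $(u^{*},v^{*})$, the vector field being smooth in a neighbourhood of $(R_\lambda,u^{*},v^{*})$ since $R_\lambda>0$) produces a solution on a slightly larger interval which, by uniqueness, extends $(u_\lambda,v_\lambda)$. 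This contradicts maximality of $R_\lambda$, so $R_\lambda=+\infty$, as claimed.

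The argument is essentially routine, so there is no serious obstacle; the only points deserving a line of care are the coercivity and hence compactness of the sublevel sets of $H$, which is precisely where the sign condition $0<\beta_2\leq\beta_1$ is used, and the observation that the $1/r$ singularity is irrelevant for the continuation argument once one stays away from the origin.
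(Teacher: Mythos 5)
Your argument is correct and is essentially the paper's own proof: the monotonicity of $H_\lambda$ from \eqref{deriveH} traps the trajectory in the compact sublevel set $\{H\leq H(0,\lambda)\}$, and global existence then follows from the standard continuation principle, whose routine details you have simply spelled out. (A minor remark: coercivity of $H$ only needs $\beta_1,\beta_2>0$, not the ordering $\beta_2\leq\beta_1$.)
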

\begin{remark}
Smoothness of solutions follows by basic ODE theory.
\end{remark}
Heuristically, (\ref{system}) should reduce to (\ref{hamsyst}) in the limit $r\rightarrow+\infty$ ($u$ being bounded), that is, dropping the singular term in the first equation. The following lemma indeed shows that the solutions to (\ref{system}) are close to the hamiltonian flow (\ref{hamsyst}) as $r\rightarrow +\infty$. The proof is the same as in \cite{cv}.
\begin{lemma}
\label{stability}
Let $(f,g)$ be the solution of (\ref{hamsyst}) with initial data $(f_{0},g_{0})$. Let $(u^{0}_{n},v^{0}_{n})$ and $\rho_{n}$ be such that 
$$\rho_{n}\xrightarrow{n\rightarrow+\infty}+\infty\qquad\mbox{and}\qquad(u_{n},v_{n})\xrightarrow{n\rightarrow+\infty} (f_{0},g_{0}) $$
Consider the solution of 
$$\left\{\begin{aligned}
    \dot{u}_{n}+\frac{u_{n}}{r+\rho_{n}} &= (2\beta_{2}u^{2}_{n}+\beta_{1}v^{2}_{n})v_{n}  \\ 
   \dot{v}_{n}&= -(\beta_{1}u^{2}_{n}+2\beta_{2}v^{2}_{n})u_{n}
\end{aligned}\right. $$
such that $u_{n}(0)=u^{0}_{n}$ and $v_{n}(0)=v^{0}_{n}$.
Then $(u_{n},v_{n})$ converges to $(f,g)$ uniformly on bounded intervals.
\end{lemma}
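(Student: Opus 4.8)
The plan is to read this as a \emph{continuous dependence} statement for the flow. Since $\rho_n\to+\infty$, the singular coefficient $\frac{1}{r+\rho_n}$ converges to $0$ uniformly on every bounded interval, so the vector fields defining the $n$-th system converge locally uniformly to the Hamiltonian vector field of \eqref{hamsyst}; combined with the convergence of the initial data, a Gronwall estimate then forces the trajectories to converge uniformly on bounded intervals. The argument is the same as the one invoked for Lemma \ref{existence} and in \cite{cv}.

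Fix $T>0$. The first step is a bound on $(u_n,v_n)$ over $[0,T]$ that is uniform in $n$ (this also shows that, for $n$ large, these solutions are actually defined on all of $[0,T]$). Repeat the computation \eqref{deriveH} with $r$ replaced by $r+\rho_n>0$: along the $n$-th flow one gets
\[
\frac{d}{dr}H\bigl(u_n(r),v_n(r)\bigr)=-\frac{u_n(r)}{r+\rho_n}\bigl(\beta_1 u_n^2(r)+2\beta_2 v_n^2(r)\bigr).
\]
Since $H(u,v)=\tfrac{\beta_1}{4}(u^4+v^4)+\beta_2 u^2v^2\ge 0$ controls $|u|\lesssim H^{1/4}$ and $u^2+v^2\lesssim H^{1/2}$, the right-hand side is bounded in absolute value by $\rho_n^{-1}C\,H(u_n,v_n)^{3/4}$ with $C=C(\beta_1,\beta_2)$; integrating the resulting differential inequality for $H^{1/4}$ and using that $H(u_n^0,v_n^0)$ is bounded (because $(u_n^0,v_n^0)\to(f_0,g_0)$), we obtain $H(u_n(r),v_n(r))\le C_T$ for all $r\in[0,T]$ and all large $n$. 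Thus $(u_n,v_n)$ stays, uniformly in $n$, in the fixed compact set $\mathcal{K}:=\{H\le C_T\}$ on $[0,T]$.

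The second step is the Gronwall estimate. On $\mathcal{K}$ the nonlinearity $F(u,v):=\bigl((2\beta_2u^2+\beta_1v^2)v,\,-(\beta_1u^2+2\beta_2v^2)u\bigr)$ is Lipschitz with constant $L=L(T)$, and $(f,g)$, being a solution of the conservative system \eqref{hamsyst}, is global and stays in a fixed compact set on $[0,T]$; enlarging $\mathcal{K}$ we may assume it contains both trajectories. Writing the $n$-th system and \eqref{hamsyst} in integral form and subtracting, and noting the singular term appears only in the first equation with $\rho_n>0$, so that $\bigl|\int_0^r\frac{u_n(s)}{s+\rho_n}\,ds\bigr|\le \frac{T}{\rho_n}\sup_{\mathcal{K}}|u|$, we get for $\varphi_n(r):=|u_n(r)-f(r)|+|v_n(r)-g(r)|$
\[
\varphi_n(r)\le \delta_n+L\int_0^r\varphi_n(s)\,ds,\qquad \delta_n:=|u_n^0-f_0|+|v_n^0-g_0|+\frac{T}{\rho_n}\sup_{\mathcal{K}}|u|\xrightarrow{\,n\to\infty\,}0 .
\]
Gronwall's lemma yields $\sup_{[0,T]}\varphi_n\le \delta_n e^{LT}\to 0$, which is precisely the claimed uniform convergence on $[0,T]$; since $T>0$ was arbitrary, this proves the lemma.

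The only delicate point is the uniform a priori bound of the first step: one must ensure that the compact set $\mathcal{K}$ is independent of $n$, which is exactly what the monotonicity-type control of the Hamiltonian $H$ along the perturbed flow provides. Everything else is the standard contraction/Gronwall machinery.
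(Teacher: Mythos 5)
Your proof is correct and is precisely the standard continuous-dependence argument that the paper delegates to \cite{cv} without writing out: a uniform-in-$n$ a priori bound obtained from the Hamiltonian, followed by a Gronwall estimate on the difference of the integral equations. One small point: the exact identity is $\frac{d}{dr}H(u_n,v_n)=-\frac{u_n^2(r)}{r+\rho_n}\bigl(\beta_1 u_n^2(r)+2\beta_2 v_n^2(r)\bigr)\le 0$ --- the prefactor is $u_n^2$, not $u_n$ (the paper's (\ref{deriveH}) carries the same typo) --- so $H$ is actually nonincreasing along the perturbed flow and the compactness of $\{H\le H(u_n^0,v_n^0)\}$ gives your Step 1 at once, making the $H^{3/4}$ differential inequality unnecessary, although your version is also valid.
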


\begin{prop}
For any $\lambda>0$, we have
\be\label{positivity}
u_{\lambda}(r), v_{\lambda}(r)>0,\qquad\forall r>0.
\ee
and
\be\label{origin}
\lim_{r\rightarrow+\infty}(u_{\lambda}(r),v_{\lambda}(r))=(0,0).
\ee
\end{prop}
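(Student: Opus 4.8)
The plan is to pin the trajectory of \eqref{system} in the open first quadrant near the origin, upgrade this to the whole half-line, and then extract the limit at infinity from the monotonicity of the Hamiltonian.

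\emph{Step 1 (local positivity and monotone quantities).} Since $v(0)=\lambda>0$, the integral formulation \eqref{integral} shows $v>0$ on a right neighbourhood of $0$; hence the integrand $s\,v(s)(2\beta_2u^2(s)+\beta_1v^2(s))$ is positive there, so $u(r)>0$ for $r>0$ small. Let $R^\ast\in(0,+\infty]$ be the largest $\rho$ such that $u,v>0$ on $(0,\rho)$. On $(0,R^\ast)$ the second equation gives $\dot v<0$, so $v$ is strictly decreasing; multiplying the first equation by $r$ gives $\frac{d}{dr}(ru)=rv(2\beta_2u^2+\beta_1v^2)>0$, so $ru$ is strictly increasing; and $\dot H_\lambda\le0$ by \eqref{deriveH}, so the orbit stays in the compact set $\{H\le\frac{\beta_1}{4}\lambda^4\}$, giving $0<u,v\le\lambda$ there.

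\emph{Step 2 (the orbit never leaves the first quadrant: the crux).} Suppose $R^\ast<+\infty$. Then $u(R^\ast),v(R^\ast)\ge0$, not both positive (maximality of $R^\ast$) and not both zero (otherwise, by forward/backward uniqueness for \eqref{system} at $r=R^\ast>0$, the solution would be $\equiv(0,0)$, contradicting $v(0)=\lambda$). If $u(R^\ast)=0<v(R^\ast)$, the first equation gives $\dot u(R^\ast)=\beta_1 v(R^\ast)^3>0$, incompatible with $u>0$ on $(0,R^\ast)$ and $u(R^\ast)=0$. So we must have $v(R^\ast)=0<u(R^\ast)=:a$, and ruling this out is the main obstacle. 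Past $R^\ast$ the orbit enters $\{u>0,v<0\}$, where $\dot v<0$, $\dot u<0$ and $\dot H_\lambda<0$, hence $u,v,H_\lambda$ all keep decreasing and the orbit reaches $\{u=0,v<0\}$; by the symmetry \eqref{odd} it then continues as minus a solution issued from the positive $v$-axis at a \emph{positive} value of $r$, so Step 1 applies again and the orbit is forced to wind around the origin. To get a contradiction one combines: (i) $H_\lambda$ is monotone and bounded below by $0$; (ii) a winding orbit cannot converge to $(0,0)$ unless it does so along a fixed quadrant, because near the origin \eqref{system} linearizes to $\dot u\approx-u/r$, $\dot v\approx-\beta_1u^3$, forcing $ru$ to a nonzero constant and $uv$ to a fixed sign for large $r$; alternatively, by Lemma \ref{stability} a winding orbit shadows the periodic orbits of \eqref{hamsyst}, along which $\int u^2(\beta_1u^2+2\beta_2v^2)\,dr$ is a fixed positive number, so each loop costs $H_\lambda$ at least $c/r_k$ with $\sum_k 1/r_k=+\infty$, contradicting boundedness of $H_\lambda$ from below. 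Hence $R^\ast=+\infty$, which is \eqref{positivity}.

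\emph{Step 3 (convergence to the origin).} By Step 1–2, $v$ is positive and strictly decreasing on $(0,+\infty)$, so $v(r)\to v_\infty\ge0$; writing $v(0)-v_\infty=\int_0^\infty u(\beta_1u^2+2\beta_2v^2)\,dr<+\infty$, if $v_\infty>0$ the integrand is $\ge 2\beta_2 v_\infty^2\,u$ eventually, so $\int^\infty u\,dr<+\infty$, and since $u$ has bounded derivative we get $u\to0$; but then the first equation gives $\dot u\to\beta_1v_\infty^3>0$, a contradiction, so $v_\infty=0$. Now $H_\lambda$ is non-increasing and $\ge0$, hence $H_\lambda(r)\to H_\infty\ge0$; since $v\to0$ and $u$ is bounded, \eqref{hamiltonian} forces $u^4\to\frac{4}{\beta_1}H_\infty$, so $u\to\mu:=(\frac{4H_\infty}{\beta_1})^{1/4}$. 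If $\mu>0$, the second equation gives $\dot v\to-\beta_1\mu^3<0$, contradicting $v\to0$; therefore $\mu=0$, i.e. $H_\infty=0$, and positive-definiteness of $H$ yields $(u_\lambda,v_\lambda)\to(0,0)$, which is \eqref{origin}. (The $L^p$-membership claims in Proposition \ref{asymptotic} then follow from the precise decay rates, established in the subsequent steps.)

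The decisive difficulty is Step 2: excluding the possibility that $v$ vanishes while $u$ is still positive, i.e.\ ruling out a winding orbit — everything else is either the elementary boundary sign analysis of Step 2 or the soft energy argument of Step 3.
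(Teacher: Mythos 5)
Your Steps 1 and 3 are fine (Step 3 is in fact a nice, more elementary route to \eqref{origin} than the paper's, which instead passes to a convergent subsequence, uses monotonicity of $H_\lambda$ and the algebraic equation $H(u,\mu)=h$ to get a full limit $(\delta,\mu)$, and then invokes Lemma \ref{stability} to identify $(\delta,\mu)$ with the unique equilibrium of \eqref{hamsyst}). But Step 2 --- which you correctly identify as the crux --- is not actually proved. Ruling out the crossing $v(R^{\ast})=0<u(R^{\ast})$ is left to two sketches, neither of which works. Sketch (i) presupposes that the winding orbit converges to the origin and draws conclusions ("$ru$ tends to a nonzero constant") that do not follow from the stated approximate equations. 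Sketch (ii) rests on the claim that each loop costs $H_\lambda$ at least $c/r_k$ with $\sum_k 1/r_k=+\infty$; this is false as stated, because $\dot H_\lambda=-\tfrac{u^2}{r}(\beta_1u^2+2\beta_2v^2)$ is \emph{quartic} in $(u,v)$, so the loss per loop is of order $h^{1/2}/r_k$ (with the period of the shadowed Hamiltonian orbit of order $h^{-1/2}$), giving roughly $dh/dr\approx -c\,h/r$, i.e.\ a polynomial decay of $h$ perfectly compatible with $H_\lambda\geq 0$. No contradiction with the lower bound on $H_\lambda$ arises, so the winding scenario is not excluded by your argument.

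The paper closes this gap with an exact first integral rather than a dynamical estimate: computing $\tfrac{d}{dr}(ru_\lambda v_\lambda)=\beta_1 r(v_\lambda^4-u_\lambda^4)$ and $\tfrac{d}{dr}(r^2H_\lambda)=\tfrac{\beta_1}{2}r(v_\lambda^4-u_\lambda^4)$, and integrating from $0$ (where both sides vanish), one gets the identity $u_\lambda(r)v_\lambda(r)=2rH_\lambda(r)$ for all $r$. Since $H$ is positive definite and $(u_\lambda,v_\lambda)$ cannot vanish at a positive $r$ (backward uniqueness), this forces $u_\lambda v_\lambda>0$ for all $r>0$ and in particular makes the boundary case $v(R^{\ast})=0<u(R^{\ast})$ impossible outright: at such a point the left-hand side would be $0$ while the right-hand side would be $2R^{\ast}\tfrac{\beta_1}{4}u(R^{\ast})^4>0$. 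You should replace the heuristics of Step 2 by this identity (or an equivalent conserved quantity); without it the proof of \eqref{positivity}, and hence of everything downstream, is incomplete.
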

\begin{proof}
Using the equations in (\ref{system}) one can compute

\be\label{first}
\frac{d}{dr}(ru_{\lambda}(r)v_{\lambda}(r))=\beta_{1}r(v^{4}_{\lambda}-u^{4}_{\lambda}),
\ee
and 

\be\label{second}
\frac{d}{dr}(r^{2}H_{\lambda}(r))=\frac{\beta_{1}}{2}r(v^{4}_{\lambda}-u^{4}_{\lambda}).
\ee
Combining (\ref{first}) and (\ref{second}) and integrating gives
\be\label{relation}
u_{\lambda}(r)v_{\lambda}(r)=2rH_{\lambda}(r)
\ee

and (\ref{positivity}) follows, $H_{\lambda}$ being positive definite.

Combining \eqref{positivity} and the second equation in \eqref{system} one sees that $\dot{v}_{\lambda}(r)\leq0$ for all $r>0$, and then
\be\label{vlimit}
\exists\lim_{r\rightarrow+\infty}v_{\lambda}(r)=:\mu\geq0.
\ee 
Moreover, since $u$ is bounded, there exists a sequecnce $r_{n}\uparrow+\infty$ such that
\be\label{usequencelimit}
\exists\lim_{n\rightarrow+\infty}u_{\lambda}(r_{n})=\delta\geq 0.
\ee 
We claim that 
\be\label{ulimit}
\lim_{r\rightarrow+\infty}u_{\lambda}(r)=\delta.
\ee
By contradiction, suppose that \eqref{ulimit} does not hold. Then there exist $\eps>0$ and another sequence $s_{n}\uparrow+\infty$ such that 
\be\label{farlimit}
\vert u_{\lambda}(s_{n})-\delta\vert\geq\eps\geq0,\qquad\forall n\in\mathbb{N}.
\ee
Up to subsequences, we can suppose that 
\be\label{usequencelimits}
\lim_{n\rightarrow+\infty}u_{\lambda}(s_{n})=\gamma\neq\delta,
\ee
for some $\gamma\geq0$. Recall that $H$ decreases along the flow of \eqref{system}, as shown in \eqref{deriveH}, and then
\be\label{hlimit}
\exists\lim_{r\rightarrow+\infty}H_{\lambda}(r)=h\geq0.
\ee
Then it follows that 
\be
(\delta,\mu),(\gamma,\mu)\in\left\{ H(u,v)=h\right\}.
\ee
It is easy to see that the algebraic equation for $u$ 
\be
H(u,\mu)=h,
\ee
has (at most) one non-negative solution and thus $\delta=\gamma$, reaching a contradiction. This proves the claim \eqref{ulimit}, and then there holds
\be
\lim_{r\rightarrow+\infty}(u_{\lambda}(r),v_{\lambda}(r))=(\delta,\mu).
\ee

Let $(\rho_{n})_{n}\subseteq\mathbb{R}$ be a sequence such that 
\be\label{rho}
\lim_{n\rightarrow+\infty}\rho_{n}=+\infty\quad,\quad\lim_{n\rightarrow+\infty}(u_{\lambda}(\rho_{n}),v_{\lambda}(\rho_{n}))=(\delta,\mu)
\ee
and consider the solution $(U,V)$ to (\ref{hamsyst}) such that $$(U(0),V(0))=(\delta,\mu). $$
By (Lemma \ref{stability}), it follows that $(u_{\lambda}(\rho_{n}+\cdot),v_{\lambda}(\rho_{n}+\cdot))$ converges uniformly to $(U,V)$ on bounded intervals. But since
\be
\lim_{n\rightarrow+\infty}(u_{\lambda}(\rho_{n}+r),v_{\lambda}(\rho_{n}+r))=(\delta,\mu),\qquad\forall r>0,
\ee
this implies that 
\be
(U(r)),V(r))=(\delta,\mu),\qquad\forall r>0
\ee
and thus $(\delta,\mu)=(0,0)$ as the latter is the only equilibrium of the hamiltonian system (\ref{hamsyst}). This proves (\ref{origin}).
\end{proof}
The above proposition shows that the solutions of (\ref{system}) actually correspond to localized solutions of the PDE (\ref{diracsystem}). The aim of the rest of the section is then to provide the exact asymptotic behavior.
\begin{prop}\label{lowergrowth}
For large $r>0$, there holds
\be\label{lower}
\frac{1}{r^{2}}\lesssim u^{2}_{\lambda}(r)+v^{2}_{\lambda}(r)\lesssim \frac{1}{r}.
\ee
\end{prop}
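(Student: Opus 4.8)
The plan is to establish the two-sided bound in \eqref{lower} by exploiting the exact algebraic relation \eqref{relation}, namely $u_\lambda(r)v_\lambda(r)=2rH_\lambda(r)$, together with the monotonicity of $H_\lambda$ and an ODE comparison for the quantity $r^2 H_\lambda(r)$. First I would observe that, since $H$ is positive definite and quartic, $H_\lambda(r)$ controls $(u_\lambda^2+v_\lambda^2)^2$ up to constants depending only on $\beta_1,\beta_2$: there exist $0<c_1\le c_2$ with $c_1(u^2+v^2)^2\le H(u,v)\le c_2(u^2+v^2)^2$. Hence it suffices to prove $r^{-4}\lesssim H_\lambda(r)\lesssim r^{-2}$ for large $r$.

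For the upper bound $H_\lambda(r)\lesssim 1/r^2$, I would use \eqref{second}, $\frac{d}{dr}(r^2 H_\lambda)=\tfrac{\beta_1}{2}r(v_\lambda^4-u_\lambda^4)$, and combine it with \eqref{first}. Since by \eqref{positivity} $u_\lambda,v_\lambda>0$ and both decay to $0$, the sign of $v_\lambda^4-u_\lambda^4$ is governed by the sign of $v_\lambda-u_\lambda$. A clean way to proceed is to bound $r(v_\lambda^4-u_\lambda^4)\le r v_\lambda^4 \le r v_\lambda^2 (u_\lambda^2+v_\lambda^2)^2/(u_\lambda^2+v_\lambda^2)$ — but more directly, note $v_\lambda^4-u_\lambda^4\le (u_\lambda^2+v_\lambda^2)^2 \lesssim H_\lambda(r)$, so that $\frac{d}{dr}(r^2 H_\lambda)\lesssim r H_\lambda = \frac{1}{r}(r^2 H_\lambda)$. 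This differential inequality only gives polynomial growth $r^2 H_\lambda \lesssim r^{C}$, which is too weak. The better route: from \eqref{relation}, $2rH_\lambda = u_\lambda v_\lambda \le \tfrac12(u_\lambda^2+v_\lambda^2) \lesssim \sqrt{H_\lambda}$ (using the lower bound $H\gtrsim (u^2+v^2)^2$), whence $\sqrt{H_\lambda(r)}\lesssim 1/r$, i.e.\ $H_\lambda(r)\lesssim 1/r^2$, which immediately yields $u_\lambda^2+v_\lambda^2 \lesssim \sqrt{H_\lambda}\lesssim 1/r$, the stated upper bound.

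For the lower bound $u_\lambda^2+v_\lambda^2\gtrsim 1/r^2$, equivalently $H_\lambda(r)\gtrsim 1/r^4$, I would return to \eqref{second} and estimate from below. Write $r(v_\lambda^4-u_\lambda^4) = r(v_\lambda^2-u_\lambda^2)(v_\lambda^2+u_\lambda^2)$. Using $v_\lambda u_\lambda = 2rH_\lambda$ and $v_\lambda^2+u_\lambda^2 \le C\sqrt{H_\lambda}$, one gets $|v_\lambda^2-u_\lambda^2| = |(v_\lambda-u_\lambda)(v_\lambda+u_\lambda)|$, and one can bound $v_\lambda^4-u_\lambda^4 \ge -u_\lambda^4 \ge -(u_\lambda^2+v_\lambda^2)^2$ crudely, but a sharper bound is needed. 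Instead I would combine \eqref{first} and \eqref{second}: from them, $\frac{d}{dr}(ru_\lambda v_\lambda - 2 r^2 H_\lambda)=0$ recovers \eqref{relation}; to get a lower bound on $H_\lambda$ I would instead estimate $\frac{d}{dr}(r^2H_\lambda) \ge -\tfrac{\beta_1}{2} r u_\lambda^4$ and control $u_\lambda^4 = (u_\lambda v_\lambda)^2 (u_\lambda/v_\lambda)^2$; using $u_\lambda^2 \le C\sqrt{H_\lambda}$ and $u_\lambda v_\lambda = 2rH_\lambda$ gives $u_\lambda^4 \le C\sqrt{H_\lambda}\cdot u_\lambda^2$, hence $r u_\lambda^4 \lesssim r H_\lambda \cdot (u_\lambda^2/\sqrt{H_\lambda})\lesssim r H_\lambda$ using the upper bound already obtained, so $\frac{d}{dr}(r^2 H_\lambda)\ge -\frac{C}{r}(r^2H_\lambda)$, giving $r^2 H_\lambda(r)\ge (r_0^2 H_\lambda(r_0))(r_0/r)^{C}$ — again only polynomial.

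I expect the genuine obstacle to be extracting the \emph{sharp} power $r^{-4}$ for the lower bound: the crude Gronwall estimates only yield an unspecified polynomial rate, so one must use the structure more carefully, presumably by showing $v_\lambda^4 - u_\lambda^4 \ge 0$ eventually (i.e.\ $v_\lambda \ge u_\lambda$ for large $r$, which is consistent with the claimed asymptotics $u\sim 1/r \gg 1/r^2 \sim v$ being \emph{false} — so in fact $u_\lambda > v_\lambda$ eventually, and $v_\lambda^4-u_\lambda^4<0$), and then integrating \eqref{second} directly: since $r^2H_\lambda$ is then decreasing, $r^2 H_\lambda(r)$ has a limit $L\ge0$, and one must show $L>0$. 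Showing $L>0$ is exactly the heart of the matter and will likely use \eqref{relation} again: if $r^2 H_\lambda \to 0$ then $r u_\lambda v_\lambda \to 0$, forcing (with the upper bound) a contradiction with the ODE for $u_\lambda$; alternatively one integrates $\frac{d}{dr}(r^2H_\lambda) = \tfrac{\beta_1}{2}r(v_\lambda^4-u_\lambda^4)$ and shows the right-hand side is integrable at $+\infty$ with a tail controlled by the upper bound $u_\lambda,v_\lambda \lesssim r^{-1/2}$, so $r(v_\lambda^4-u_\lambda^4) = O(r^{-1})$, which is \emph{not} integrable — precisely the borderline case. Resolving this borderline behavior, likely by a bootstrap improving $u_\lambda \lesssim r^{-1/2}$ to $u_\lambda \lesssim r^{-1}$ first (plugging the rough bound back into the first equation of \eqref{system}, $\frac{d}{dr}(ru_\lambda) = r v_\lambda(2\beta_2 u_\lambda^2+\beta_1 v_\lambda^2) \lesssim r \cdot r^{-1/2}\cdot r^{-1} = r^{-1/2}$, giving $ru_\lambda \lesssim r^{1/2}$, i.e.\ $u_\lambda \lesssim r^{-1/2}$ again — no gain, so the bootstrap must instead use the conserved-quantity relation), is the step I anticipate requiring the most care, and I would devote the bulk of the argument to it.
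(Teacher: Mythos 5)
Your upper bound is correct and coincides with the paper's argument: from \eqref{relation} together with $u_{\lambda}v_{\lambda}\le\tfrac12(u_{\lambda}^{2}+v_{\lambda}^{2})\lesssim\sqrt{H_{\lambda}}$ one gets $rH_{\lambda}\lesssim\sqrt{H_{\lambda}}$, hence $H_{\lambda}\lesssim r^{-2}$ and $u_{\lambda}^{2}+v_{\lambda}^{2}\lesssim r^{-1}$.

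The lower bound, however, is not established, and you acknowledge as much. The missing idea does not live in \eqref{second} but in the dissipation identity \eqref{deriveH}: computing $\dot H_{\lambda}$ along the flow of \eqref{system} gives $\dot H_{\lambda}=-\frac{u_{\lambda}^{2}}{r}\,(\beta_{1}u_{\lambda}^{2}+2\beta_{2}v_{\lambda}^{2})$ (the factor should carry a square on $u_{\lambda}$, so the right-hand side is quartic), and the \emph{pointwise algebraic} inequality $u^{2}(\beta_{1}u^{2}+2\beta_{2}v^{2})=\beta_{1}u^{4}+2\beta_{2}u^{2}v^{2}\le\beta_{1}(u^{4}+v^{4})+4\beta_{2}u^{2}v^{2}=4H(u,v)$ yields the differential inequality $\dot H_{\lambda}\ge-4H_{\lambda}/r$ with the \emph{explicit} constant $4$ dictated by the homogeneity of $H$ in \eqref{hamiltonian}. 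Equivalently $\frac{d}{dr}\bigl(r^{4}H_{\lambda}\bigr)\ge0$, so $r^{4}H_{\lambda}(r)\ge r_{0}^{4}H_{\lambda}(r_{0})>0$, which is exactly the sharp bound $H_{\lambda}\gtrsim r^{-4}$ and hence $u_{\lambda}^{2}+v_{\lambda}^{2}\gtrsim r^{-2}$. Your Gronwall attempts cannot succeed because they only produce an unspecified constant $C$ in $\frac{d}{dr}(r^{2}H_{\lambda})\ge-\tfrac{C}{r}(r^{2}H_{\lambda})$, which gives a non-sharp exponent; the whole point is that the constant is pinned to $4$. Worse, your final fallback --- proving $L:=\lim_{r\to+\infty}r^{2}H_{\lambda}(r)>0$ --- targets a false statement: since the true asymptotics are $u_{\lambda}\sim r^{-1}$, $v_{\lambda}\sim r^{-2}$, one has $H_{\lambda}\sim r^{-4}$, so $r^{2}H_{\lambda}\to0$ and only $r^{4}H_{\lambda}$ stays bounded away from zero. (Indeed $u_{\lambda}>v_{\lambda}$ eventually, $r^{2}H_{\lambda}$ is eventually decreasing to $0$, and no contradiction can be extracted from that quantity.)
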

\begin{proof}
Remark that 
\be\label{H}
(u^{2}_{\lambda}(r)+v^{2}_{\lambda}(r))^{2}\sim H_{\lambda}(r). 
\ee
Moreover, by (\ref{hamiltonian},\ref{deriveH}) one gets 
$$ \dot{H}_{\lambda}(r)\geq-4\frac{H_{\lambda}(r)}{r},$$
and the comparison principle for ODEs implies that 
$$ H_{\lambda}(r)\gtrsim \frac{1}{r^{4}}$$
and thus by (\ref{H}), we get the first inequality in (\ref{lower}).

The second part of  (\ref{lower}) follows by (\ref{relation},\ref{H}), using the elementary inequality $$2u_{\lambda}(r)v_{\lambda}(r)\leq u^{2}_{\lambda}(r)+v^{2}_{\lambda}(r),\qquad\forall r>0.$$
\end{proof}

The first equation in (\ref{system}) can be rewritten as
\be\label{rewrite}
\frac{d}{dr}(ru_{\lambda}(r))=rv_{\lambda}(r)(2\beta_{2}u^{2}_{\lambda}(r)+\beta_{1}v^{2}_{\lambda}(r))
\ee

Since $v_{\lambda}>0$, we deduce from (\ref{rewrite}) that the function $f(r):=(ru_{\lambda}(r))$ is strictly increasing and thus
\be\label{limit}
\lim_{r\rightarrow+\infty}f(r)=:l\in(0,+\infty]
\ee
Suppose that
\be\label{divergence}
 l=+\infty.
\ee

This implies that 
\be\label{superlinear}
u_{\lambda}(r)\geq\frac{1}{r}
\ee
for $r>0$ large. Combining (\ref{superlinear}) and (\ref{lower}), using the second equation in (\ref{system}) we deduce that
\be
\dot{v}_{\lambda}(r)\lesssim-\frac{1}{r^{3}}.
\ee
Using again the comparison principle, we conclude that 
\be\label{vdecay}
v_{\lambda}(r)\lesssim\frac{1}{r^{2}}
\ee
for $r>0$ large. By (\ref{lower},\ref{vdecay}), integrating (\ref{rewrite}) gives
\be
f(r)=\int^{r}_{0}v_{\lambda}(s)\underbrace{(2\beta_{2}u^{2}_{\lambda}(s)+\beta_{1}v^{2}_{\lambda}(s))s}_{bounded}ds\lesssim\int^{+\infty}_{1}\frac{ds}{s^{2}}<+\infty,\quad \forall r>0,
\ee
thus contradicting (\ref{divergence}). Then $0<l<+\infty$, and this implies that
\be\label{ubehavior}
u_{\lambda}(r)\sim\frac{1}{r}
\ee
for large $r>0$. Since (\ref{vdecay}) holds, using the second equation in (\ref{system}) and (\ref{ubehavior}) one gets  
\be
\dot{v}_{\lambda}(r)\sim-\frac{1}{r^{3}}.
\ee
and then for large $r>0$, we have
\be
v_{\lambda}(r)\sim\frac{1}{r^{2}}.
\ee

The integrability properties of the solution follow by the fact that 
$$\vert\psi(r)\vert^{2}=u^{2}_{\lambda}(r)+v^{2}_{\lambda}(r) \sim \frac{1}{r^{2}}, $$
as $r\longrightarrow+\infty$. This concludes the proof of (Prop. \ref{asymptotic}), and thus of (Theorem \ref{main}).
%%%%%%%%%%%%%%%%%%%%%%%%%%%%%%%%%%%%%%%%%%%%%%%%%%%%%%%%%%%%%%%%%%%%%%%
\section{Variational characterization}\label{variational}
The solutions of (\ref{diracsystem}) found in the previous section by dynamical systems methods admit a variational characterization. Indeed, one can prove that they are critical points of a suitable action functional. More precisely one can show that they are \emph{least action} critical points of the corresponding action. In this sense they can be considered as \emph{ground state solutions}. Our variational argument also provides an alternative, more sophisticated, existence proof. This is not only interesting in itself, but also gives more informations about the properties of those solutions. 
\begin{remark}
The argument presented in this section works for $\mathring{H}^{\frac{1}{2}}$-solutions of (\ref{dirac}) of arbitrary form. However, we focus on symmetric solutions of the form (\ref{form}) as in that case we can also provide the exact asymptotic behavior of solutions, by the method described in the previous section.
\end{remark}
\begin{thm}\label{variationalproof}
Equation (\ref{diracsystem}) admits a family of smooth solutions in $\mathring{H}^{\frac{1}{2}}(\R^{2},\C^{2})$, of the form (\ref{form}) and satisfying the decay estimates (\ref{decay}). Moreover, they coincide with the solutions found in the previous section (Theorem \ref{main}).
\end{thm}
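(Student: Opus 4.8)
The plan is to recover equation \eqref{diracsystem} as the Euler--Lagrange equation of an action functional, and then to produce a critical point by a variational scheme adapted to the strongly indefinite nature of the massless Dirac operator. Because the quadratic form associated with $\mathcal D = -i\sigma\cdot\nabla$ is unbounded both above and below on $\mathring H^{1/2}(\R^2,\C^2)$, the functional $\psi\mapsto \frac12\langle \mathcal D\psi,\psi\rangle - \frac14\int W(\psi)$ (with $W$ the quartic nonlinearity whose gradient matches the right-hand side of \eqref{diracsystem}) is not of mountain-pass type directly. The key idea, as announced in the paper, is to pass to the \emph{dual} formulation: writing the nonlinearity in terms of its Legendre transform, one reformulates the problem so that $\mathcal D^{-1}$ (defined on the orthogonal complement of the kernel, via Fourier multipliers of order $-1$) appears as a compact-ish operator on a suitable $L^q$-type space, and the dual functional becomes bounded below on a natural constraint. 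I would set up this duality carefully, checking that the quartic growth is energy-subcritical with respect to $\mathring H^{1/2}$ in dimension $2$ (the relevant Sobolev embedding is $\mathring H^{1/2}(\R^2)\hookrightarrow L^4(\R^2)$, which is exactly critical, so some care — restricting to the radial/symmetric subspace of the form \eqref{form} — is needed to regain compactness).

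Next I would introduce the Nehari-type constraint for the dual functional and show the infimum of the action over it is positive and attained. The natural route: restrict to the subspace $E_{\mathrm{sym}}$ of spinors of the form \eqref{form}, on which the $O(2)$-type symmetry both makes the nonlinear problem equivalent to the ODE system \eqref{system} and, crucially, improves the embedding $E_{\mathrm{sym}}\cap \mathring H^{1/2}\hookrightarrow L^4$ to a compact one (a Strauss-type lemma for these equivariant spinors). I would take a minimizing sequence on the Nehari manifold, use the scaling invariance \eqref{scaling} to normalize it, apply concentration--compactness (in the dual/$L^4$ picture the defect of compactness is governed by translations and dilations; equivariance under \eqref{form} kills translations, and the Nehari normalization together with \eqref{scaling} handles dilations), and extract a nontrivial weak limit. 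Standard Nehari arguments then show the weak limit is a critical point and that its action equals the infimum, i.e.\ it is a least-action solution. Elliptic regularity for the Dirac operator (bootstrapping from $L^4$ through the equation, using that the right-hand side is smooth in $\psi$) upgrades it to $\psi\in C^\infty$; the family is then generated by \eqref{scaling}.

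Finally I would identify the variational solution with the ODE solution from Section \ref{dynamical}. On $E_{\mathrm{sym}}$ every $\mathring H^{1/2}$ critical point of the form \eqref{form} is, by the reduction leading to \eqref{system}, a solution of \eqref{system} with the boundary behavior forced by membership in $\mathring H^{1/2}$ and by the regularity at the origin ($u(0)=0$, $v(0)=\lambda$ for some $\lambda$), so Proposition \ref{asymptotic} applies: the solution decays as in \eqref{decay}, and uniqueness for each $\lambda>0$ plus the scaling \eqref{scaling} shows the variational family is precisely the ODE family. In particular the decay estimates \eqref{decay} are inherited, which is the point of working with the symmetric ansatz rather than general $\mathring H^{1/2}$ solutions.

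I expect the main obstacle to be the loss of compactness: the quartic nonlinearity is $\mathring H^{1/2}$-critical in $\R^2$, so the problem sits exactly at the borderline where concentration--compactness can fail, and the scaling invariance \eqref{scaling} means there is no a priori bound separating minimizing sequences from the trivial solution or from "bubbling" profiles. Overcoming this requires genuinely exploiting the equivariance in \eqref{form} — both to rule out translational concentration and to get a compact embedding on the symmetric subspace — together with a Nehari normalization that fixes the dilation degree of freedom. The duality step is what makes the Nehari manifold well-defined and the action bounded below there; getting that duality correct (in particular, the right function spaces for $\mathcal D^{-1}$ acting between $L^{4/3}$ and $L^4$) is the other technically delicate point, though it is essentially a known device in the Dirac literature that I would cite and adapt.
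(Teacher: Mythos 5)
Your overall strategy is the same as the paper's: pass to the dual functional via the Legendre transform of the convex quartic Hamiltonian, minimize over a Nehari-type constraint in $L^{4/3}$, recover compactness by concentration--compactness restricted to the symmetric class \eqref{form}, and then identify the critical point with the ODE family of Section \ref{dynamical} via $u(0)=0$, $v(0)=\lambda$ and uniqueness for \eqref{system}. Two steps in your write-up, however, are not correct as stated. First, there is no compact embedding of the symmetric subspace $E$ into $L^{4}$: the exponent $4$ is exactly the critical one for $\mathring{H}^{1/2}(\R^{2})$, and the dilation invariance \eqref{scaling} (equivalently \eqref{dualscaling} on the dual side) acts on radial/equivariant spinors just as well as on general ones, so no Strauss-type gain is available at the endpoint. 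Your own closing paragraph contradicts the compactness claim and describes the correct substitute --- equivariance kills translations so concentration can only occur at the origin, and one must rule out dichotomy, vanishing and concentration at $0$ by hand using the Nehari normalization and a rescaling --- which is precisely what the paper does. But you cannot have both: if you invoke the compact embedding the whole concentration--compactness analysis is vacuous, and the lemma you would be invoking is false.

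Second, the regularity step does not follow from ``bootstrapping from $L^{4}$ through the equation.'' Since $\psi\in L^{4}$ gives $\nabla G(\psi)\in L^{4/3}$ and $\cD^{-1}$ maps $L^{4/3}$ only back into $L^{4}$ (the gain of one derivative is exactly eaten by the Sobolev embedding at the critical exponent), the bootstrap produces no improvement at the first step and stalls. This is the standard obstruction for critical Dirac equations; one needs a genuinely critical regularity result (the paper invokes the one proved in \cite{remarkdirac} for weak $L^{4}$ solutions) before smoothness, and hence the pointwise identification with the solutions of Proposition \ref{asymptotic} and the decay \eqref{decay}, can be concluded. With these two points repaired --- drop the compact-embedding claim in favour of the concentration--compactness argument you already sketch, and replace the naive bootstrap by a critical regularity theorem --- your proof matches the paper's.
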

This section is devoted to the proof of the above theorem. Some preliminary definitions are in order.

The system (\ref{diracsystem}) can be written in a more compact form as:
\be\label{dirac}
\cD\psi=\nabla G_{\beta_{1},\beta_{2}}(\psi),
\ee
with $\psi=\begin{pmatrix}\psi_{1} \\ \psi_{2} \end{pmatrix}:\R^{2}\longrightarrow\C^{2}$, where
\be\label{nonlinearterm}
G_{\beta_{1},\beta_{2}}(\psi):=\frac{\beta_{1}}{4}(\vert\psi_{1}\vert^{4}+\vert\psi_{2}\vert^{4})+\beta_{2}\vert\psi_{1}\vert^{2}\vert\psi_{2}\vert^{2}.
\ee
To simplify notations, in the sequel we omit the indices $\beta_{j}$.

Here
\be\label{operator}
\cD:= -i(\vec{\sigma}\cdot\nabla)
\ee
is the Dirac operator and
$\vec{\sigma}\cdot\nabla:=\tilde{\sigma}_{1}\partial_{1}+\tilde{\sigma}_{2}\partial_{2}$, where
\begin{equation}\label{eq:pauli} \tilde{\sigma}_{1}:=\begin{pmatrix} 0 \quad& 1 \\ 1 \quad& 0 \end{pmatrix}\quad,\quad \tilde{\sigma}_{2}:=\begin{pmatrix} 0 \quad& i \\ -i \quad& 0 \end{pmatrix} \end{equation}
are Pauli-type matrices\footnote{We could rewrite the equation (\ref{diracsystem}) in terms of standard Pauli matrices $\sigma_{1}=\begin{pmatrix} 0 \quad& 1 \\ 1 \quad& 0 \end{pmatrix},\sigma_{2}=\begin{pmatrix} 0 \quad& -i \\ i \quad& 0 \end{pmatrix}$. This amounts to an unitary transformation on the spinor space $\C^{2}$ and does not affect our argument. However we prefer not to do so, in order to remain consistent with the notations of \cite{arbunichsparber}.}.

It is easy to see that (\ref{dirac}) is, formally, the Euler-Lagrange equation of the action functional
\be\label{action}
\cL(\psi):=\frac{1}{2}\int_{\R^{2}}\langle\psi,\cD\psi\rangle dx-\int_{\R^{2}} G(\psi)dx.
\ee
We look for critical points of (\ref{action}) belonging to the Sobolev space $\mathring{H}^{\frac{1}{2}}(\R^{2},\C^{2})$, as this is a natural choice in view of the continuous embedding
\be\label{sobolev}
\mathring{H}^{\frac{1}{2}}(\R^{2},\C^{2})\hookrightarrow L^{4}(\R^{2},\C^{2}).
\ee
given by the Gagliardo-Nirenberg inequality (see, e.g.\cite{evans}). Moreover, it is not hard to see that $\cL\in C^{1}(\mathring{H}^{\frac{1}{2}}(\R^{2},\C^{2}))$.
%\begin{equation}
%\cD :\mathring{H}^{\frac{1}{2}}(\R^{2},\C^{2})\longrightarrow \mathring{H}^{-\frac{1}{2}}(\R^{2},\C^{2})
%\end{equation}

More precisely, we will work with the closed subspace of functions satisfying (\ref{form}):
\be\label{E}
E:=\left\{\psi\in\mathring{H}^{\frac{1}{2}}(\R^{2},\C^{2}) : \psi(r,\vartheta)=\begin{pmatrix}  iu(r)e^{i\vartheta}  \\ v(r) \end{pmatrix}, u,v:[0,+\infty)\longrightarrow\R \right\},
\ee
 $(r,\vartheta)$ being polar coordinates in $\R^{2}$. To simplify the presentation, we will sometimes adopt the notation 
\be\label{notation}
\psi=(u,v)
\ee
for $\psi\in E$, and more generally for spinors satisfying (\ref{form}). We will often identify $\psi$ with the pair $(u,v)$.
%and 
%\begin{equation}
%\begin{tikzcd}
%L^{\frac{4}{3}}(\R^{2},\C^{2})\arrow[r,"B"]& \mathring{W}^{1,\frac{4}{3}}(\R^{2},\C^{2})\arrow[r,"j"]&\mathring{H}^{\frac{1}{2}}(\R^{2},\C^{2}),
%\end{tikzcd}
%\end{equation}
%Then we have
%\begin{equation}
%A\circ i^{*}=j\circ B.
%\end{equation}

%and since $\cD$ is self-adjoint we also have 
%\begin{equation}
%K^{*}_{\lambda}=K_{\lambda},
%\end{equation}
%that is $K_{\lambda}$ itself is self-adjoint.

If $\psi\in E$, the action functional on $E$ reads as
\be\label{radialaction}
\mathcal{S}(u,v)=\frac{\cL(\psi)}{2\pi}=\int^{+\infty}_{0}\left(\frac{1}{2}\left(\dot{u}v+\frac{uv}{r}-u\dot{v}\right)-H(u,v)\right)rdr
\ee
where $H$ is the hamiltonian defined in (\ref{hamiltonian}).  It is not hard to see that the Euler-Lagrange equation for (\ref{radialaction}) is given by the ODE (\ref{system}).

Looking for critical points of (\ref{radialaction}) one may try to prove that it has a linking geometry ( see e.g. in \cite{es}). However, since this may not be straightforward we rather exploit the convexity of the hamiltonian $H$ in order to use duality techniques. This allows us to easily define a minimax level, the dual functional possessing a mountain pass structure. Duality is a classical tool in the study of hamiltonian systems (see \cite{criticalhamiltonian,ekeland}), which turns out to be useful also for elliptic PDEs as shown, for instance, in \cite{Isobecritical,ambrosettistruwe}. 
\begin{lemma}\label{convexity}
The function $$H:(u,v)\in\R^{2}\longrightarrow H(u,v)\in\R$$ is convex.
\end{lemma}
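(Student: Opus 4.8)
The plan is to prove convexity via the second-derivative test: since $H$ is a polynomial, hence $C^{2}$, it is convex on $\R^{2}$ if and only if its Hessian is positive semidefinite at every point. So everything reduces to checking that the Hessian of $H$ has non-negative trace and non-negative determinant everywhere.

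First I would record the second partial derivatives of $H(u,v)=\frac{\beta_{1}}{4}(u^{4}+v^{4})+\beta_{2}u^{2}v^{2}$, namely
$$\partial_{uu}H=3\beta_{1}u^{2}+2\beta_{2}v^{2},\qquad \partial_{vv}H=3\beta_{1}v^{2}+2\beta_{2}u^{2},\qquad \partial_{uv}H=4\beta_{2}uv.$$
The trace $\partial_{uu}H+\partial_{vv}H=(3\beta_{1}+2\beta_{2})(u^{2}+v^{2})$ is clearly $\geq 0$ since $\beta_{1},\beta_{2}>0$, so the whole point is to check $\det(\mathrm{Hess}\,H)\geq 0$.

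The key computation is the expansion
$$\det(\mathrm{Hess}\,H)=(3\beta_{1}u^{2}+2\beta_{2}v^{2})(3\beta_{1}v^{2}+2\beta_{2}u^{2})-16\beta_{2}^{2}u^{2}v^{2}=6\beta_{1}\beta_{2}(u^{4}+v^{4})+(9\beta_{1}^{2}-12\beta_{2}^{2})u^{2}v^{2}.$$
If $9\beta_{1}^{2}\geq 12\beta_{2}^{2}$ this is a sum of non-negative terms and we are done. Otherwise the coefficient $9\beta_{1}^{2}-12\beta_{2}^{2}$ is negative, and I would use the elementary inequality $u^{2}v^{2}\leq\frac{1}{2}(u^{4}+v^{4})$ — which, precisely because that coefficient is negative, produces a \emph{lower} bound — to get
$$\det(\mathrm{Hess}\,H)\geq\Bigl(6\beta_{1}\beta_{2}+\tfrac{1}{2}(9\beta_{1}^{2}-12\beta_{2}^{2})\Bigr)(u^{4}+v^{4})=\tfrac{1}{2}\bigl(9\beta_{1}^{2}+12\beta_{2}(\beta_{1}-\beta_{2})\bigr)(u^{4}+v^{4})\geq 0,$$
the last inequality holding because $0<\beta_{2}\leq\beta_{1}$. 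Hence $\det(\mathrm{Hess}\,H)\geq 0$ in all cases, the Hessian is positive semidefinite, and $H$ is convex.

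I expect no serious obstacle here; the only mild subtlety, and the reason I would go through the two-case determinant estimate rather than a one-line argument, is that the natural shortcuts fail under the weak hypothesis $\beta_{2}\leq\beta_{1}$: writing $H$ as a convex increasing function of $u^{2}+v^{2}$, or invoking convexity in the variables $(u^{2},v^{2})$, only works when $\beta_{1}\geq 2\beta_{2}$. Concretely, the identity $H=\frac{\beta_{2}}{2}(u^{2}+v^{2})^{2}+(\frac{\beta_{1}}{4}-\frac{\beta_{2}}{2})(u^{4}+v^{4})$ exhibits $H$ as a sum of convex functions precisely in the range $\beta_{1}\geq 2\beta_{2}$, so to cover the full admissible parameter range $0<\beta_{2}\leq\beta_{1}$ one really wants the determinant computation above.
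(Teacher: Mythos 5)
Your proof is correct and follows essentially the same route as the paper: both compute $\det D^{2}H=6\beta_{1}\beta_{2}(u^{4}+v^{4})+(9\beta_{1}^{2}-12\beta_{2}^{2})u^{2}v^{2}$ and conclude via the elementary inequality $2u^{2}v^{2}\leq u^{4}+v^{4}$ together with $0<\beta_{2}\leq\beta_{1}$ (the paper does it in one estimate, bounding the coefficient of $u^{2}v^{2}$ below by $-3\beta_{2}^{2}$, rather than splitting into cases). Your additional check that the diagonal entries/trace are non-negative is a small point the paper leaves implicit but which is indeed needed to conclude positive semidefiniteness from $\det\geq 0$ alone.
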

\begin{proof}
A simple computation gives
\be\label{hessian}
\operatorname{det}D^{2}H(u,v)=6\beta_{1}\beta_{2}(u^{4}+v^{4})+(9\beta^{1}_{2}-12\beta^{2}_{2})u^{2}v^{2}. 
\ee
Recall that $0<\beta_{2}\leq\beta_{1}$, and then by (\ref{hessian})
\be
\operatorname{det}D^{2}H(u,v)>\beta^{2}_{2}\left[6(u^{4}+v^{4})-3u^{2}v^{2} \right]>\frac{9}{2}\beta^{2}_{2}(u^{4}+v^{4})
\ee
thanks to the elementary inequality $2u^{2}v^{2}\leq u^{4}+v^{4}$, and the claim follows.
\end{proof}
 
We can thus define the \textit{Legendre transform}  $H^{*}:\R^{2}\longrightarrow\R\cup\{+\infty\}$ of $H$ as the function 
\begin{equation}\label{legendre}
H^{*}(w,z)=\sup\{\langle(u,v),(w,z)\rangle_{\R^{2}}-H(u,v)\;:\; (u,v)\in\R^{2}\} 
\end{equation}
The hamiltonian $H$ is a homogeneous polynomial of degree 4. This implies that $H^{*}$ is everywhere finite and, thanks to basic scaling properties of the Legendre transform, it is homogeneous of degree $\frac{4}{3}$. Moreover, since $H(0,0)=0$, it immediately follows from (\ref{legendre}) that $H^{*}$ is positive definite. We collect those remarks in the following
\begin{prop}\label{legendreproperties}
The function $H^{*}$ is everywhere finite, positive definite and homogeneous of degree $\frac{4}{3}$.
\end{prop}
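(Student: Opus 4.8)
## Proof Proposal for Proposition \ref{legendreproperties}

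The plan is to establish the three asserted properties in sequence, the only real input being an elementary quartic coercivity bound for $H$. First I would record that, since $\beta_2 > 0$ forces the cross term to be nonnegative,
$$H(u,v) \geq \frac{\beta_1}{4}(u^4+v^4) \geq \frac{\beta_1}{8}(u^2+v^2)^2,$$
the last step using $2u^2v^2 \leq u^4+v^4$. So $H$ grows at least quartically, and combined with the convexity already proved in Lemma \ref{convexity} this is exactly what the Legendre calculus needs.

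For finiteness, I would fix $(w,z)$ and estimate, via Cauchy--Schwarz and the bound above,
$$\langle (u,v),(w,z)\rangle_{\R^{2}} - H(u,v) \leq |(u,v)|\,|(w,z)| - \frac{\beta_1}{8}|(u,v)|^4;$$
the right-hand side depends only on $t := |(u,v)|$ through $t|(w,z)| - \frac{\beta_1}{8}t^4$, which is bounded on $[0,+\infty)$ and tends to $-\infty$ as $t\to\infty$. Hence $H^*(w,z) < +\infty$ for every $(w,z)$.

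Homogeneity follows from the standard scaling trick: in (\ref{legendre}), for $\lambda > 0$, substitute $(u,v) = \lambda^{1/3}(u',v')$. Since $H$ is homogeneous of degree $4$ one has $H(\lambda^{1/3}(u',v')) = \lambda^{4/3} H(u',v')$ and $\langle \lambda^{1/3}(u',v'), \lambda(w,z)\rangle_{\R^{2}} = \lambda^{4/3}\langle (u',v'),(w,z)\rangle_{\R^{2}}$, so $\lambda^{4/3}$ factors out of the supremum, giving $H^*(\lambda w, \lambda z) = \lambda^{4/3} H^*(w,z)$ --- the conjugate exponent being $\tfrac{4}{3} = \tfrac{p}{p-1}$ for $p = 4$. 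Finally, for positive definiteness: taking $(u,v)=(0,0)$ in (\ref{legendre}) gives $H^*(w,z) \geq -H(0,0) = 0$, and since $H \geq 0$ with $H(0,0) = 0$ also $H^*(0,0) = 0$; for $(w,z) \neq 0$, testing the supremum with $(u,v) = \eps(w,z)$ yields $H^*(w,z) \geq \eps|(w,z)|^2 - \eps^4 H(w,z) > 0$ once $\eps > 0$ is small enough.

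Honestly, I do not expect a genuine obstacle here: every step reduces either to the quartic lower bound on $H$ or to the elementary scaling behavior of the Legendre transform of a positively homogeneous convex function. The only point deserving a moment's care is bookkeeping the correct conjugate exponent $4/3$, which the substitution $(u,v)\mapsto\lambda^{1/(p-1)}(u,v)$ makes transparent.
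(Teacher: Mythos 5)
Your proof is correct and follows essentially the same route as the paper, which merely records these facts as immediate consequences of the degree-$4$ homogeneity of $H$ and of $H(0,0)=0$ without writing out the details. If anything, your version is slightly more careful: you make explicit the quartic coercivity bound $H(u,v)\geq\frac{\beta_1}{8}(u^2+v^2)^2$ needed for finiteness (homogeneity alone would not suffice), and the test vector $(u,v)=\eps(w,z)$ needed to upgrade $H^*\geq 0$ to strict positivity off the origin.
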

Consider the functional, defined for $(u,v)\in L^{4}(\R_{+},rdr)^{2}$ as
\be
\cH(u,v):=\int^{+\infty}_{0}H(u,v)rdr
\ee
Its Legendre transform (or dual) is the functional \be\label{dualH}\cH^{*}:L^{\frac{4}{3}}(\R_{+},rdr)^{2}\longrightarrow \R\ee defined (with an abuse of notation) as
\be\label{legendreinfinite}
\begin{split}
\cH^{*}(w,z):&=\sup\{\langle(u,v),(w,z)\rangle_{L^{4}\times L^{\frac{4}{3}}}-\cH(u,v)\;:\; (u,v)\in L^{4}(\R_{+},rdr)^{2}\} \\&= \int^{+\infty}_{0}H^{*}(w,z)rdr
\end{split}
\ee
where $ \langle\cdot,\cdot\rangle_{L^{4}\times L^{\frac{4}{3}}}$ stands for the duality product. There holds
\be\label{dualdifferential}
d\cH\circ d\cH^{*}=\operatorname{id}_{L^{\frac{4}{3}}},\quad d\cH^{*}\circ d\cH=\operatorname{id}_{L^{4}}.
\ee

Consider the following isomorphism
\be\label{iso1}
\cD:E\longrightarrow E^{*},
\ee
and its inverse
\be\label{inverse1}
A:=\cD^{-1}:E^{*}\longrightarrow E.
\ee
where $E^{*}$ is the dual of $E$.
Let
\begin{equation}\label{radialembedding}
j:E\longrightarrow L^{4}(\R_{+},rdr)^{2}
\end{equation}
be the Sobolev embedding.
%and 
%\begin{equation}
%j: \mathring{W}^{1,\frac{4}{3}}(\R^{2},\C^{2})\longrightarrow \mathring{H}^{\frac{1}{2}}(\R^{2},\C^{2})
%\end{equation}
%the Sobolev embeddings.
Consider the following sequence of maps 
\begin{equation}
\begin{tikzcd}
K:L^{\frac{4}{3}}(\R_{+},rdr)^{2}\arrow[r,"j^{*}"]& E^{*}\arrow[r,"A"]& E\arrow[r,"j"]& L^{4}(\R_{+},rdr)^{2},\end{tikzcd}
\end{equation}

The action functional (\ref{radialaction}) can be rewritten as
\be\label{dualityaction}
\cS(u,v)=\frac{1}{2}\langle (w,z),\cD(w,z)\rangle_{E\times E'}-\cH(j(u,v)),\qquad (u,v)\in E.
\ee
Then for $\psi=(u,v)\in E$ the differential of $\cS$ reads as
\be\label{radialdifferential}
d\cS(\psi)=\cD\psi-j^{*}d\cH(j(\psi))\in E^{*}.
\ee

We finally define the dual action functional 
\be\label{dualaction}
\begin{split}
\cS^{*}(w,z):&=\mathcal{H}^{*}(w,z)-\frac{1}{2}\langle K(w,z),(w,z)\rangle_{L^{4}\times L^{\frac{4}{3}}}\\&=\int^{+\infty}_{0}H^{*}(w,z)rdr-\frac{1}{2}\int^{+\infty}_{0}\langle K(w,z),(w,z)\rangle rdr
\end{split}
\ee

for $(w,z)\in L^{\frac{4}{3}}(\R_{+},rdr)^{2}$,which is of class $C^{1}$ on $L^{\frac{4}{3}}(\R_{+},rdr)^{2}$.

\begin{prop}\label{correspondingcritical}
There is a one-to-one correspondence between critical points of $\cS$ in $E$ and critical points of $\cS^{*}$ in $L^{\frac{4}{3}}(\R_{+},rdr)^{2}.$
\end{prop}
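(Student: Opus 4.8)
The plan is to set up the standard duality bijection between critical points of a convex Hamiltonian action and critical points of its dual, adapted to the functional-analytic framework introduced above. The key algebraic identity I would exploit is the convex-duality relation $(\ref{dualdifferential})$, namely $d\cH\circ d\cH^{*}=\operatorname{id}$ and $d\cH^{*}\circ d\cH=\operatorname{id}$, together with the fact that $\cD:E\to E^{*}$ is an isomorphism with inverse $A$. First I would write the Euler--Lagrange equations for both functionals: by $(\ref{radialdifferential})$, $\psi=(u,v)\in E$ is a critical point of $\cS$ iff
\be\label{critS}
\cD\psi=j^{*}d\cH(j(\psi)),
\ee
while differentiating $(\ref{dualaction})$ gives that $(w,z)\in L^{4/3}(\R_{+},rdr)^{2}$ is a critical point of $\cS^{*}$ iff
\be\label{critSstar}
d\cH^{*}(w,z)=K(w,z)=j\,A\,j^{*}(w,z).
\ee

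Next I would construct the two maps realizing the correspondence. Given a critical point $\psi\in E$ of $\cS$, set $(w,z):=d\cH(j(\psi))\in L^{4/3}(\R_{+},rdr)^{2}$. Applying $d\cH^{*}$ and using $(\ref{dualdifferential})$ gives $d\cH^{*}(w,z)=j(\psi)$; on the other hand, from $(\ref{critS})$ we have $\cD\psi=j^{*}(w,z)$, i.e.\ $\psi=A j^{*}(w,z)$, hence $j(\psi)=K(w,z)$, so $(\ref{critSstar})$ holds. Conversely, given a critical point $(w,z)$ of $\cS^{*}$, set $\psi:=A j^{*}(w,z)\in E$. Then $\cD\psi=j^{*}(w,z)$, and by $(\ref{critSstar})$ combined with $(\ref{dualdifferential})$ we get $(w,z)=d\cH(d\cH^{*}(w,z))=d\cH(K(w,z))=d\cH(j(\psi))$, whence $\cD\psi=j^{*}d\cH(j(\psi))$, which is $(\ref{critS})$. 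Finally I would check these two assignments are mutually inverse: starting from $\psi$, forming $(w,z)=d\cH(j\psi)$, then forming $A j^{*}(w,z)=A\cD\psi=\psi$ by $(\ref{critS})$; and symmetrically in the other direction using $(\ref{dualdifferential})$ and $(\ref{critSstar})$. This gives the claimed one-to-one correspondence, and as a byproduct the critical values match up to the usual sign/Legendre relation.

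The main obstacle I expect is not the algebra — which is the classical Clarke/Ekeland dual action computation — but rather making sure all the pairings and mapping properties are genuinely well-defined in the chosen spaces. Specifically, one needs: that $j(\psi)\in L^{4}(\R_{+},rdr)^{2}$ so that $d\cH(j(\psi))$ makes sense and lands in $L^{4/3}(\R_{+},rdr)^{2}$ (this uses that $H$ is homogeneous of degree $4$, so $\nabla H$ is $3$-homogeneous and maps $L^4\to L^{4/3}$); that $j^{*}$ indeed maps $L^{4/3}(\R_{+},rdr)^{2}$ into $E^{*}$, which is exactly dual to the embedding $(\ref{radialembedding})$; that $A=\cD^{-1}$ is the bona fide isomorphism $(\ref{inverse1})$, so $A j^{*}(w,z)\in E$; and that the duality products in $(\ref{dualityaction})$ and $(\ref{dualaction})$ are consistent with the identification $\langle\cdot,\cdot\rangle_{L^4\times L^{4/3}}$. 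Once these compatibility checks are in place — and they all follow from the Sobolev embedding $(\ref{sobolev})$, the homogeneity in Proposition $\ref{legendreproperties}$, and the invertibility of $\cD$ on $E$ — the correspondence follows formally. I would also remark that the regularity ``critical point of $\cS^{*}$ $\Rightarrow$ smooth solution'' is deferred: at this stage one only gets the weak correspondence, and elliptic bootstrap (or the ODE analysis of Section \ref{dynamical}) upgrades regularity afterwards.
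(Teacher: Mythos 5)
Your proof is correct and follows essentially the same route as the paper: both directions use the maps $\psi\mapsto d\cH(j(\psi))$ and $(w,z)\mapsto A\circ j^{*}(w,z)$ together with the Legendre reciprocity relation (\ref{dualdifferential}) and the factorization $K=j\circ A\circ j^{*}$. Your additional checks that the two assignments are mutually inverse and that all pairings are well defined in $L^{4}$/$L^{4/3}$ are sensible refinements of the same argument.
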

\begin{proof}
Let $\psi\in E$ be a critical point of $S$. Then by (\ref{radialdifferential}), we have $\cD\psi=j^{*}d\cH(j(\psi))$. Define $\varphi=d\cH(j(\psi))\in L^{\frac{4}{3}}(\R_{+},rdr)^{2}$, so that $\cD\psi=j^{*}(\varphi)$. This implies that $\psi=A\circ j^{*}(\varphi)$ and
\be\label{1}
j(\psi)=j\circ A\circ j^{*}(\varphi)=K(\varphi).
\ee
On the other hand, by (\ref{dualdifferential}) we have
\be\label{2}
j(\psi)=d\cH^{*}(\varphi).
\ee
Combining (\ref{1}) and (\ref{2}) we obtain
\be
d\cS^{*}(\varphi)=d\cH^{*}(\varphi)-K(\varphi)=0,
\ee
and then $\varphi$ is a critical point of $\cS^{*}$.

Conversely, suppose $\varphi\in L^{\frac{4}{3}}(\R_{+},rdr)^{2}$ is a critical point of $\cS^{*}$, and define $\psi=A\circ j^{*}(\varphi)\in E$. Since $\varphi$ is a critical point, we have $d\cH^{*}(\varphi)-K(\varphi)=0$. Then (\ref{dualdifferential}) implies that
\be
\varphi=d\cH\circ K(\varphi)=d\cH\circ j\circ A\circ j^{*}(\varphi)=d\cH(j(\psi)).
\ee
We have $j^{*}(\varphi)=j^{*}\circ d\cH(j(\psi))$ and $\cD\psi=j^{*}\circ d\cH(j(\psi))$, and thus $\psi$ is a critical point of $\cS$.
\end{proof}
\begin{remark}\label{rmkduality}
More generally, $\cS$ and $\cS^{*}$ have the same compactness properties and there is a one-to-one correspondence between their Palais-Smale sequences (see, e.g. \cite{Isobecritical} for more details).
\end{remark}
Since finding a critical point of $\mathcal{S}$ is equivalent to finding a critical point of the dual functional $\mathcal{S}^{*}$ we will focus on the latter, which has a simpler structure. More precisely, we will exploit the homogeneity properties of $\cS^{*}$ using a Nehari-manifold argument (see, e.g. \cite{nehari} and references therein). However, the fact that the second integral in (\ref{dualaction}) is not positive definite must be taken into account. We remark that a Nehari-type argument has been previously used by Ding and Ruf \cite{dingruf}, in the study of semiclassical states for critical Dirac equations. 

%For the sake of presentation we occasionally referrto the literature, only sketching standard arguments.

We have pointed out (Remark \ref{remarkscaling}) that the equation (\ref{dirac}), and thus the functional (\ref{action}), is scale-invariant. The same holds, of course, for the dual action $\cS^{*}$. Indeed, one can verify that it is invariant with respect to the following scaling 

\be\label{dualscaling}
\psi(\cdot)\mapsto\psi_{\delta}(\cdot):= \delta^{\frac{3}{2}}\psi(\delta \cdot),\quad \delta>0.
\ee
Moreover, even if the functional (\ref{action}) is invariant by translation, this is no longer true for (\ref{dualaction}) thanks to the ansatz (\ref{form}). Thus scaling is the only (local) symmetry which may prevent strong convergence in our variational procedure. In what follows we only sketch the rest of the proof, as it is based on standard arguments from concentration-compactness theory \cite{CCcompactI,CClimitI,struwevariational}.

First of all, it easy to see that the functional $\cS^{*}$ possesses a mountain-pass geometry.
\begin{lemma}\label{lowerboundrho}
There exists $\rho>0$ such that 
$$\alpha:=\inf\{\mathcal{S}^{*}(w,z)\;:\;(w,z)\in L^{\frac{4}{3}}(\R_{+},rdr)^{2},\;\Vert(w,z)\Vert_{\frac{4}{3}}=\rho\}>0. $$
Moreover, for $(w,z)\in L^{\frac{4}{3}}(\R_{+},rdr)^{2}$ such that $\int^{+\infty}_{0}\langle K(w,z),(w,z)\rangle rdr>0 $, there holds 
$$\lim_{t\rightarrow+\infty}\mathcal{S}^{*}(t(w,z))=-\infty. $$
\end{lemma}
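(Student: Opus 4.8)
The plan is to establish the two assertions separately, using only the structural properties of $\cS^*$ recorded above: the homogeneity of $\cH^*$ (degree $\frac43$, positive definite, by Proposition \ref{legendreproperties}) and the fact that the quadratic form $(w,z)\mapsto \frac12\int_0^{+\infty}\langle K(w,z),(w,z)\rangle\,rdr$ is continuous on $L^{4/3}(\R_+,rdr)^2$. For the first part, I would write, for $\|(w,z)\|_{4/3}=\rho$,
\be
\cS^*(w,z) = \cH^*(w,z) - \tfrac12\int_0^{+\infty}\langle K(w,z),(w,z)\rangle\,rdr.
\ee
Since $H^*$ is positive definite and homogeneous of degree $\frac43$, an elementary compactness/homogeneity argument on the unit sphere of $\R^2$ gives $H^*(w,z)\geq c_0(|w|^2+|z|^2)^{2/3}$ for some $c_0>0$, hence $\cH^*(w,z)\geq c_0\|(w,z)\|_{4/3}^{4/3} = c_0\rho^{4/3}$. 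On the other hand $K = j\circ A\circ j^*$ is a bounded operator $L^{4/3}\to L^4$ (it is the composition of the Sobolev embedding $j$, its adjoint $j^*$, and the isomorphism $A=\cD^{-1}$), so $\big|\int_0^{+\infty}\langle K(w,z),(w,z)\rangle rdr\big|\leq \|K\|\,\|(w,z)\|_{4/3}^2 = \|K\|\rho^2$. Therefore $\cS^*(w,z)\geq c_0\rho^{4/3} - \tfrac12\|K\|\rho^2$, and since $\frac43<2$, the right-hand side is strictly positive for $\rho>0$ small enough; fixing such a $\rho$ yields $\alpha>0$.

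For the second part, fix $(w,z)$ with $\int_0^{+\infty}\langle K(w,z),(w,z)\rangle rdr =: q >0$. Using the homogeneity of degree $\frac43$ of $\cH^*$ and the quadratic homogeneity of the $K$-term,
\be
\cS^*\big(t(w,z)\big) = t^{4/3}\,\cH^*(w,z) - \tfrac12\, t^2\, q,\qquad t>0.
\ee
Since $q>0$ and $2 > \frac43$, the quadratic term dominates as $t\to+\infty$, so $\cS^*(t(w,z))\to -\infty$. This proves the claim. (Here I use that $\cS^*$ is finite-valued, which follows from $H^*$ being everywhere finite and the boundedness of $K$.)

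The only genuinely substantive point is the coercivity-type lower bound $H^*(w,z)\gtrsim (|w|^2+|z|^2)^{2/3}$ in the first part; everything else is bookkeeping with homogeneity and the boundedness of $K$. I do not expect it to be a real obstacle: $H^*$ is continuous, positive on the unit circle $S^1\subset\R^2$ (positive definiteness), hence bounded below there by a positive constant, and the estimate on all of $\R^2$ follows by $\frac43$-homogeneity. The slightly delicate bookkeeping item is making sure the pairing $\langle K(w,z),(w,z)\rangle_{L^4\times L^{4/3}}$ is the one appearing in \eqref{dualaction} and that $K$ is self-adjoint in the appropriate sense (so that $q$ is real and the homogeneity computation in the second part is exactly $t^2 q$); this is immediate from $K = j\circ \cD^{-1}\circ j^*$ with $\cD$ self-adjoint, but is worth stating.
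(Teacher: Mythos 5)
Your proof is correct and follows essentially the same route as the paper's (very terse) argument: the paper also deduces both claims from the $\tfrac{4}{3}$-homogeneity of $\cH^{*}$ versus the quadratic nature of the $K$-term, for $\rho$ small. You merely fill in the details the paper leaves implicit (the coercivity bound $H^{*}(w,z)\gtrsim(|w|^{2}+|z|^{2})^{2/3}$ via positivity on the unit circle, and the boundedness of $K:L^{4/3}\to L^{4}$), and these are all sound.
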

\begin{proof}
Recall that the dual functional is defined as 
$$\mathcal{S}^{*}(w,z)=\int^{+\infty}_{0}H^{*}(w,z)rdr-\frac{1}{2}\int^{+\infty}_{0}\langle K(w,z),(w,z)\rangle rdr$$ 
for $(w,z)\in L^{\frac{4}{3}}(\R_{+},rdr)^{2}$. 
Since $H^{*}$ is homogeneous of degree $\frac{4}{3}$, as already remarked, the first assertion follows is $\rho>0$ is sufficiently small, the other term being quadratic. The second part of the claim follows immediately, for the same reason.
\end{proof}
In view of the above lemma it is natural to define the mountain-pass level for $\mathcal{S}^{*}$ as 
$$c:=\inf \left\{\max_{t\geq0}\mathcal{S}^{*}(t(w,z))\;:\;(w,z)\in L^{\frac{4}{3}}(\R_{+},rdr)^{2},\int^{+\infty}_{0}\langle K(w,z),(w,z)\rangle rdr>0 \right\}. $$
Remark that there holds
\be
\max_{t\geq0}\mathcal{S}^{*}(t(w,z))\geq\alpha>0,\qquad \forall (w,z)\in L^{\frac{4}{3}}(\R_{+},rdr)^{2},
\ee
where $\alpha>0$ is as in Lemma \ref{lowerboundrho}. Then we have
\be
c\geq\alpha>0.
\ee
Moreover, the homogeneity properties of the terms appearing in $\cS^{*}$ imply that  
\be\label{equality}
c=\inf_{(w,z)\in\cN}\cS^{*}(w,z)>0,
\ee
where $\cN$ is the Nehari manifold
\be\label{nehari}
\cN:=\{(w,z)\in L^{\frac{4}{3}}(\R_{+},rdr)^{2}\setminus\{0\} : \langle d\cS^{*}(w,z),(w,z)\rangle=0 \}.
\ee

We are thus led to study the minimization problem (\ref{equality}).

Let $(w_{n},z_{n})_{n\in\mathbb{N}}\subseteq\cN$ be a minimizing sequence for $\cS^{*}$. By Ekeland's variational principle (see \cite{ekeland}), we can assume that it actually is a Cerami-sequence, that is: 
\be\label{ceramidefinition} \begin{cases}
\cS^{*}(w_{n},z_{n})\longrightarrow c,\\
(1+\Vert(w_{n},z_{n}) \Vert_{L^{\frac{4}{3}}})d\cS^{*}(w_{n},z_{n})\xrightarrow{L^{4}} 0,
\end{cases}
 \qquad\mbox{as}\quad n\longrightarrow\infty.
\ee
\begin{prop}
The sequence $(w_{n},z_{n})_{n\in\mathbb{N}}\subseteq\cN$ is bounded in $L^{\frac{4}{3}}(\R_{+},rdr)^{2}$.
\end{prop}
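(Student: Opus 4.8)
The plan is to show boundedness of the Cerami sequence $(w_n,z_n)\subseteq\cN$ by combining the Cerami condition with the two distinct homogeneities of $\cS^*$. Write $a_n:=\cH^*(w_n,z_n)=\int_0^\infty H^*(w_n,z_n)\,r\,dr$ and $b_n:=\tfrac12\int_0^\infty\langle K(w_n,z_n),(w_n,z_n)\rangle\,r\,dr$, so that $\cS^*(w_n,z_n)=a_n-b_n\to c$. Since $H^*$ is homogeneous of degree $\tfrac43$, Euler's identity gives $\langle d\cH^*(w_n,z_n),(w_n,z_n)\rangle=\tfrac43 a_n$, while the quadratic term contributes $2b_n$; hence
\be\label{boundedproofA}
\langle d\cS^*(w_n,z_n),(w_n,z_n)\rangle=\tfrac43 a_n-2b_n.
\ee
The Cerami condition \eqref{ceramidefinition} forces $(1+\|(w_n,z_n)\|_{\frac43})\,d\cS^*(w_n,z_n)\to0$ in $L^4$, so in particular $\langle d\cS^*(w_n,z_n),(w_n,z_n)\rangle=o(\|(w_n,z_n)\|_{\frac43})$ as $n\to\infty$; thus $\tfrac43 a_n-2b_n=o(\|(w_n,z_n)\|_{\frac43})$.

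Next I would eliminate $b_n$. From $a_n-b_n=c+o(1)$ and $\tfrac43 a_n-2b_n=o(\|(w_n,z_n)\|_{\frac43})$, subtracting twice the first relation from the second yields
\be\label{boundedproofB}
-\tfrac23 a_n = -2c + o(1) + o(\|(w_n,z_n)\|_{\frac43}),
\ee
that is, $a_n=3c+o(1)+o(\|(w_n,z_n)\|_{\frac43})$. Now $\cH^*$ is, by Proposition \ref{legendreproperties} and the fact that $H^*$ is a positive definite $\tfrac43$-homogeneous function on $\R^2$, comparable to the $L^{\frac43}$-norm to the power $\tfrac43$: there is $C>0$ with
\be\label{boundedproofC}
a_n=\int_0^\infty H^*(w_n,z_n)\,r\,dr\geq C\,\|(w_n,z_n)\|_{L^{\frac43}(\R_+,rdr)}^{\frac43}.
\ee
(This is because, $H^*$ being continuous, positive away from the origin and $\tfrac43$-homogeneous, one has $H^*(w,z)\geq c_0(|w|^{\frac43}+|z|^{\frac43})$ pointwise; integrating against $r\,dr$ and using equivalence of $\ell^{\frac43}$ and $\ell^2$ norms on $\R^2$ gives the claim.) Combining \eqref{boundedproofC} with $a_n=3c+o(1)+o(\|(w_n,z_n)\|_{\frac43})$: if $\|(w_n,z_n)\|_{\frac43}\to\infty$ along a subsequence, the left side grows like $\|(w_n,z_n)\|_{\frac43}^{\frac43}$ whereas the right side is $o(\|(w_n,z_n)\|_{\frac43})$, a contradiction since $\tfrac43>1$. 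Hence $\|(w_n,z_n)\|_{L^{\frac43}}$ stays bounded.

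The step I expect to require the most care is the lower bound \eqref{boundedproofC}, i.e. the coercivity estimate $\cH^*(w,z)\gtrsim\|(w,z)\|_{\frac43}^{\frac43}$. The homogeneity of $H^*$ reduces this to a compactness statement on the unit circle of $\R^2$, where positive-definiteness of $H^*$ (Proposition \ref{legendreproperties}) gives a strictly positive minimum; the only subtlety is propagating the pointwise inequality through the integral over $\R_+$ with weight $r\,dr$, which is immediate once the pointwise bound is in hand. Everything else is bookkeeping with Euler's identity for homogeneous functions and the Cerami condition; note that one does not even need to control $b_n$ on its own, since it is cleanly eliminated in \eqref{boundedproofB}. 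One should also record, for later use, that once boundedness is established, $b_n=a_n-c+o(1)$ is bounded too, so both terms of $\cS^*$ are controlled along the sequence.
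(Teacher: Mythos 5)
Your proof is correct and follows essentially the same route as the paper: Euler's identity for the $\frac{4}{3}$-homogeneous term $\cH^{*}$ against the quadratic term forces $\cH^{*}(w_{n},z_{n})$ to remain bounded, and the coercivity $\cH^{*}(w,z)\gtrsim\Vert(w,z)\Vert_{L^{\frac{4}{3}}}^{\frac{4}{3}}$ (which the paper merely asserts as \eqref{equivalentnorm}, and which you usefully justify via homogeneity and positivity of $H^{*}$ on the unit circle) then bounds the norm. The only difference is that the paper uses the exact Nehari constraint $\langle d\cS^{*}(w_{n},z_{n}),(w_{n},z_{n})\rangle=0$, which yields $\cS^{*}=\frac{1}{3}\cH^{*}$ on $\cN$ and hence $\cH^{*}(w_{n},z_{n})\to 3c$ directly, whereas you reach the same conclusion up to an $o\left(\Vert(w_{n},z_{n})\Vert_{\frac{4}{3}}\right)$ error via the Cerami condition --- both work, yours being marginally more general since it does not require $(w_{n},z_{n})\in\cN$.
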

\begin{proof}
We have $$d\cS^{*}(w_{n},z_{n})=\nabla H^{*}(w_{n},w_{n})-K(w_{n},z_{n}).$$ Since the function $H^{*}$ is $\frac{4}{3}$-homogeneous there holds $$ \langle\nabla H^{*}(w_{n},w_{n}),(w_{n},z_{n})\rangle=\frac{4}{3}H^{*}(w_{n},z_{n}).$$
Then by the definition of the Nehari manifold (\ref{nehari}), it follows that \be\label{norm}\cS^{*}(w_{n},z_{n})=\frac{1}{3}\int^{+\infty}_{0}H^{*}(w_{n},z_{n})rdr.\ee

The claim thus follows because 
\be\label{equivalentnorm}
 \int^{+\infty}_{0}H^{*}(w_{n},z_{n})rdr \sim \Vert(w_{n},z_{n})\Vert^{\frac{4}{3}}_{L^{\frac{4}{3}}}, 
 \ee
and $(w_{n},z_{n})_{n\in\mathbb{N}}$ is a minimizing sequence.
\end{proof}
By the above lemma we may assume that 
\be\label{weak}
(w_{n},z_{n})\rightharpoondown (w,z), \qquad\mbox{weakly in}\quad L^{\frac{4}{3}}(\R_{+},rdr)^{2},
\ee
as $n\rightarrow+\infty$. One needs to study the concentration behavior of the minimizing sequence in order to prove \emph{strong} $L^{\frac{4}{3}}$-convergence. 

We already remarked that scaling invariance may prevent strong convergence, as Cerami sequences may blow-up around some points. Since we are essentially working with radial functions, concentration may only occur at the origin. More precisely (recall \eqref{equivalentnorm}), there holds
\be\label{weakmeasure}
H^{*}(w_{n},z_{n})rdr=:\nu_{n}\rightharpoondown\nu:= H^{*}(w,z)rdr+\alpha_{0}\delta_{0},
\ee
weakly in the sense of measures, where $\delta_{0}$ is a Dirac mass concentrated at the origin and $\alpha_{0}\geq0$.

Recall that 
\be\label{totalmass}
 \int^{+\infty}_{0}H^{*}(w_{n},z_{n})rdr =3\cS^{*}(w_{n},z_{n})\rightarrow 3c,\qquad\mbox{as}\quad n\rightarrow+\infty.
\ee
Suppose that the minimizing sequence $(w_{n},z_{n})_{n\in\mathbb{N}}$ splits into two bumps, one of them centered around the origin and the other one carrying a positive part of the "mass" at infinity (the \emph{dichotomy case} \cite{CCcompactI}). More precisley, assume that there exist $0<b<3c$, and two sequences of radii $r_{n},r'_{n}\rightarrow+\infty$, with $\frac{r_{n}}{r'_{n}}\rightarrow0$ such that
\be\label{dicotomia}
\int^{r_{n}}_{0}H^{*}(w_{n},z_{n})rdr\rightarrow b,\qquad \int^{r'_{n}}_{r_{n}}H^{*}(w_{n},z_{n})rdr\rightarrow 0,\qquad\mbox{as}\quad n\rightarrow+\infty.
\ee
Take a cutoff function $\theta\in C^{\infty}_{c}([0,\infty))$, $0\leq\theta\leq 1$ such that $\theta\equiv 1$ on $[0,1]$ and $\theta\equiv 0$ on $[2,\infty)$, and define
\be\label{bumps}
(w^{1}_{n},z^{1}_{n})(r):=\theta\left(\frac{r}{r_{n}} \right)(w_{n},z_{n}),\qquad (w^{2}_{n},z^{2}_{n}):=\left(1-\theta\left(\frac{r}{r'_{n}}\right) \right)(w_{n},z_{n})(r).
\ee
There holds $$\cS^{*}(w_{n},z_{n})-\cS^{*}(w^{1}_{n},z^{1}_{n})-\cS^{*}(w^{2}_{n},z^{2}_{n})\rightarrow0, \qquad\mbox{as}\quad n\rightarrow+\infty, $$
and both sequences in \eqref{bumps} are Cerami sequences for the functional $\cS^{*}$, that is
\be
0<\cS^{*}(w^{k}_{n},z^{k}_{n})\rightarrow c_{k}<c,
\ee
and
\be\label{cerami2}
\left(1+\Vert(w^{k}_{n},z^{k}_{n})\Vert_{L^{\frac{4}{3}}}\right)d\cS^{*}(w^{k}_{n},z^{k}_{n})\xrightarrow{L^{4}}0, 
\ee
as $n\rightarrow+\infty$, with $k=1,2$.
\begin{remark}
The above estimates can be worked out (along the same lines as in \cite[Section 2.1]{CClimitII}) recalling that the operator $K$ in \eqref{dualaction} acts as $\cD^{-1}$ and exploiting the decay of the corresponding Green kernel $$G(x,y)=-\frac{1}{2\pi}\frac{x-y}{\vert x-y\vert^{2}}\cdot,$$ where the dot indicates the Clifford product (see Remark \ref{clifford}). 
\end{remark}
Consider, for instance, the sequence $(w^{1}_{n},z^{1}_{n})_{n\in\mathbb{N}}$. Then the condition \eqref{cerami2} implies that
\be\label{asymptoticallynehari}
\langle d\cS^{*}(w^{1}_{n},z^{1}_{n}),(w^{1}_{n},z^{1}_{n}) \rangle_{L^{4}\times L^{\frac{4}{3}}}\longrightarrow0,\qquad\mbox{as}\quad n\rightarrow+\infty,
\ee 
that is, the sequence is asymptotically on the Nehari manifold $\cN$. 

Moreover, there exists a sequence $t_{n}>1$ such that $t_{n}(w^{1}_{n},z^{1}_{n})\in\cN,\forall n\in\mathbb{N}$ ( see, e.g. \cite{nehari} ). Recalling that $H^{*}$ is $\frac{4}{3}$-homogeneous, the definition \eqref{nehari} of $\cN$ then gives
\be\label{scaled}
\frac{4}{3t^{\frac{2}{3}}_{n}}\int^{\infty}_{0}H^{*}(w^{1}_{n},z^{1}_{n})rdr=\int^{\infty}_{0}\langle K(w^{1}_{n},z^{1}_{n}),(w^{1}_{n},z^{1}_{n})\rangle rdr,\qquad\forall n\in\mathbb{N}.
\ee
Combining \eqref{asymptoticallynehari} and \eqref{scaled} one gets
\be
\left(1-t^{-\frac{2}{3}}_{n} \right)\frac{4}{3}\int^{\infty}_{0}H^{*}(w^{1}_{n},z^{1}_{n})rdr\longrightarrow 0,\qquad\mbox{as}\quad n\rightarrow+\infty,
\ee
and thus the first condition in \eqref{dicotomia} implies that
\be\label{tendeuno}
\lim_{n\rightarrow+\infty}t_{n}=1.
\ee
Then since $t_{n}(w^{1}_{n},z^{1}_{n})\in\cN$, by \eqref{dicotomia} and \eqref{tendeuno} there holds $$\cS^{*}(t_{n}(w^{2}_{n},z^{2}_{n}))=\frac{t^{\frac{4}{3}}_{n}}{3}\int^{+\infty}_{0}H^{*}(w^{1}_{n},z^{1}_{n})rdr\in(0,c),$$
for $n$ large, contradicting the minimality of $c=\inf_{\cN}\cS^{*}$.  This means that dichotomy cannot occur, and then the sequence of measures $(d\nu_{n})_{n\in\mathbb{N}}$ in \eqref{weakmeasure} is \emph{tight}. Consequently, up to extraction, \eqref{totalmass} gives
\be\label{limitmass}
\int_{\R^{2}}d\nu=3c.
\ee

Up to suitably rescaling the sequence, we may assume that the weak limit is non-trivial, that is, \emph{vanishing} is also excluded \cite{CCcompactI}. Indeed, one can find a sequence $\lambda_{n}>0, n\in\mathbb{N}$ such that for the rescaled spinor 
\be\label{scalenormalization}
(\tilde{w}_{n},\tilde{z}_{n})(\cdot):=\lambda^{\frac{3}{2}}_{n}(w_{n},z_{n})(\lambda_{n}\cdot)
\ee
there holds 
\be\label{normalization}
Q_{n}(1)=\int^{1}_{0}H^{*}(\tilde{w}_{n},\tilde{z}_{n})rdr=c,\qquad\forall n\in\mathbb{N},
\ee
where $Q_{n}(\cdot)$ is the concentration function of $(\tilde{w}_{n},\tilde{z}_{n})$ \cite{CCcompactI,CClimitI}.

Assume that $(w,z)=(0,0)$. Then by \eqref{weakmeasure} we have $\nu=\alpha_{0}\delta_{0}$, and the normalization \eqref{normalization} gives $\alpha_{0}\leq c$. Then \eqref{weakmeasure} and \eqref{limitmass} imply
\be
3c=\int_{\R^{2}}d\nu=\alpha_{0}\leq c,
\ee
which is clearly absurd. This allows us to conclude that 
\be
(w,z)\neq(0,0),
\ee
that is, the above normalization \eqref{normalization} rules out the \emph{vanishing case}. 

The last step in order to conclude the strong convergence of the minimizing sequence $(w_{n},z_{n})_{n\in\mathbb{N}}$ is to show that actually $\alpha_{0}=0$ in \eqref{weakmeasure}. If this is not the case, since by \eqref{normalization} there holds $0<\alpha_{0}\leq c$, this property and the tightness of the sequence $(d\nu_{n})_{n\in\mathbb{N}}$ imply that the sequence $(w_{n},z_{n})_{n\in\mathbb{N}}$ splits into two parts, one blowing up at the origin, as $n\rightarrow+\infty$, and concentrating a portion $\alpha_{0}$ of the mass at that point, and another non-trivial part carrying the rest of the mass, essentially localized in an interval of the form $[1,R]$ (corresponding to an annulus in $\R^{2}$), for some $R>0$. Exploiting again the scale-invariance of the problem, one can suitably rescale the sequence, as in \eqref{scalenormalization}, removing the blowup at the origin, and at the same time "sending at infinity" the bump localized in $[1,R]$. In this way we have created a sequence for which \emph{dichotomy holds} (see \eqref{dicotomia}), and this is not possible, as already shown.

Finally, we conclude that $\alpha_{0}=0$ in \eqref{weakmeasure} and then 
\be\label{strongg}
(w_{n},z_{n})\xrightarrow{L^{\frac{4}{3}}}(w,z)\in\cN\qquad\mbox{as}\quad n\rightarrow+\infty.
\ee
Thus $\cS^{*}(w,z)=\min_{\cN}\cS^{*}$, and correspondingly 

 $$(u,v)=\left(A\circ j^{*} \right)(w,z),$$ is a critical point of $\cS$.

\begin{remark}
Since the Nehari manifold $\cN$ contains all critical points of $\cS^{*}$ and $\cS^{*}(w,z)=\min_{\cN}\cS^{*}$, we conclude that $(w,z)$ is a \emph{least action} critical point of $\cS^{*}$. In this sense it can be considered a sort of \emph{ground state}. The same remark holds for $(u,v)$, as a critical point of $\cS$.
\end{remark}

Since we are dealing with a critical equation, smoothness of solutions is not authomatic as standard bootstrap arguments do not apply. Anyway, the regularity result proven in \cite{remarkdirac} (which holds for weak solutions in $L^{4}$) ensures that $(u,v)$ actually is of class $C^{\infty}$. Being a critical point of $\cS$, $(u,v)$ solves the Euler-Lagrange equation (\ref{system}), and smoothness forces \be u(0)=0,\ee as we cannot have singularities. Moreover, since $(u,v)$ is a non-trivial solution, necessarily \be v(0)\neq0.\ee Assume, for instance, $v(0)=\lambda>0$. Since the equation is scale-invariant and odd, as anticipated in Remark (\ref{remarkscaling}), we get a continuous family of (non-trivial) solutions $(u_{\lambda},v_{\lambda})$ parametrized by $\lambda\neq0$, applying those symmetries. Uniqueness for (\ref{system}) then allows to conclude the proof. %By the above proposition we may assume that, up to a subsequence, the following holds
%\be\label{weakconvergence}
%\begin{cases}
%(u_{n},v_{n})\rightharpoondown (u,v), \qquad\mbox{weakly in}\quad E\\
%(u_{n},v_{n})\longrightarrow (u,v), \qquad\mbox{strongly in}\quad L^{p}_{loc}(\R_{+},rdr)^{2}, \forall p\in[2,4), \\ 
%(u_{n},v_{n})\longrightarrow (u,v), \qquad\mbox{almost everywhere},
%\ee
%as $n\longrightarrow\infty$.
%In order to prove \textit{strong} convergence of $(u_{n},v_{n})$ in $E$ we have to deal with the fact that the functional (\ref{action}) (and consequently (\ref{radialaction})) is invariant by scaling, as anticipated in (Remark \ref{osservazione}).  As described in the following lemma, the only possible blow-up may occur at the origin.
%\begin{lemma}
%Consider the sequence of positive measures on $\R^{2}$ 
%\be\label{measures}
%\nu_{n}:=\vert(u_{n},v_{n})\vert^{4}rdr,\qquad (r:=\vert x\vert, x\in\R^{2}).
%\ee
%There holds
%\be
%\nu_{n}\rightharpoondown \vert(u_{n},v_{n})\vert^{4}rdr +\nu_{0}\delta_{0},\qquad\mbox{in}\quad \mathcal{M}(\R^{2})
%\ee
%weakly in the sense of measures, as $n\rightarrow+\infty$. Here $\nu_{0}\geq0$ and $\delta_{0}$ is the Dirac mass at the origin.
%\end{lemma}
%\begin{proof}

%\end{proof}

%%%%%%%%%%%%%%%%%%%%%%%%%%%%%%%%%%%%%%%%%%%%%%%%%%%%%%%%%%%%%%%%%%%%%%%

\section{The massive case}\label{massivesection}
Dirac points are protected by particular symmetries \cite{introdiracmaterials}, and this is the case for $\mathcal{P}\mathcal{T}$-symmetry (parity + time inversion) for honeycomb Schr\"{o}dinger operators. Indeed, in \cite{binding,FWhoneycomb} it is proved that a suitable perturbation breaking such symmetry creates a gap, thus lifting the conical degeneracy. This results in a mass term in the effective Dirac operator (\ref{operator})\footnote{As already remarked, one can express everything in term of standard Pauli matrices by exchanging spinor components.}: 

\be\label{massiveoperator}
\cD_{m}:= -i\vec{\sigma}\cdot\nabla +m\tilde{\sigma}_{3},\qquad \tilde{\sigma}_{3}:=\begin{pmatrix} -1 \quad& 0 \\ 0 \quad& 1 \end{pmatrix},
\ee
as it can be seen (at least formally) using a multiscale expansion analogous to the one done in \cite{FTWprotected,ilanweinstein}.
%\begin{remark}
%More general mass term have been considered in the literature. In particular, neglecting (proper) electron spin all possible mass gap have been classified and can be described by an effective Dirac hamiltonian (REF). As already remarked, they all appear as a %consequence of some symmetry-breaking.
%\end{remark}
In \cite{shooting} we proved the existence of solitary waves for the massive variant of (\ref{diracsystem}), with a pure cubic nonlinearity, corresponding to the choice $$\left(\beta_{1}=1, \beta_{2}=\frac{1}{2}\right).$$ This particular case is a model example as it exhibits all the analytical difficulties, being $H^{\frac{1}{2}}$-\textit{critical}. Our proof in \cite{shooting} takes advantages of the ansatz (\ref{form}) in order to solve compactness issues, and it is based on dynamical systems techniques. More precisely, after a suitable rescaling we exploit the properties of the explicit solution to the (massless) limiting problem. The analysis carried out in the present paper allows us to deal with the general case: $$0<\beta_{2}\leq\beta_{1},$$
and thus with the equation
$$
i\partial_{t}\alpha+\cD_{m}\alpha=\nabla G_{\beta_{1},\beta_{2}}(\alpha),
$$
with $G_{\beta_{1},\beta_{2}}$ is defined as in (\ref{nonlinearterm}). Looking for stationary solutions
$$\alpha(t,x)=e^{i\omega t}\psi(x),\qquad (t,x)\in\R\times\R^{2},$$
gives to the following equation for $\psi(\cdot)$
\be\label{generalcase}
(\cD_{m}-\omega)\psi=\nabla G_{\beta_{1},\beta_{2}}(\psi)
\ee
 where the frequency lies in the spectral gap $\omega\in(-m,m)$, as the spectrum of $\cD_{m}$ is given by
$$\sigma(\cD_{m})=(-\infty,-m]\cup[m,+\infty),$$
see e.g. \cite{diracthaller}.
 
Exploiting the ansatz (\ref{form}) one is lead to study the flow of the following system
\begin{equation}\label{massivesystem}
\left\{\begin{aligned}
    \dot{u}+\frac{u}{r} &=(2\beta_{2}u^{2}+\beta_{1}v^{2})v-(m-\omega)v \\ 
   \dot{v}&=-(\beta_{1}u^{2}+2\beta_{2}v^{2})u-(m+\omega)u
\end{aligned}\right.
\end{equation}
complemented with initial data $u(0)=0,v(0)=\lambda\neq0$.

The following result holds:
 \begin{thm}\label{massive}
For each $\omega\in(-m,m)$, equation (\ref{generalcase}) admits a smooth localized solution, with exponential decay at infinity. 
\end{thm}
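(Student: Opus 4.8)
The plan is to establish Theorem \ref{massive} by adapting the dynamical-systems argument from Section \ref{dynamical} and from \cite{shooting}, using the massless solution constructed in Theorem \ref{main} as the leading-order profile near the origin after a suitable rescaling. The starting point is the system \eqref{massivesystem} with initial data $u(0)=0$, $v(0)=\lambda>0$. Local existence and uniqueness of a $C^{1}$ maximal solution $(u_{\lambda},v_{\lambda})$ on some $[0,R_{\lambda})$ follow exactly as in Lemma \ref{existence} via the integral reformulation and a contraction argument, and smoothness is then automatic from ODE regularity. The crucial structural point is that dropping the singular term $u/r$ yields the Hamiltonian system with Hamiltonian
\be
H_{m,\omega}(u,v)=H(u,v)+\frac{m-\omega}{2}v^{2}+\frac{m+\omega}{2}u^{2},
\ee
where $H$ is as in \eqref{hamiltonian}; since $\omega\in(-m,m)$ both quadratic coefficients are strictly positive, so $H_{m,\omega}$ is positive definite with a strict minimum at the origin and compact sublevel sets near $0$. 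Along the flow of \eqref{massivesystem} one computes $\dot{H}_{m,\omega}(r)=-\frac{u_{\lambda}(r)}{r}\big(\beta_{1}u_{\lambda}^{2}+2\beta_{2}v_{\lambda}^{2}+(m+\omega)\big)$, which has a sign as soon as $u_{\lambda}$ has one; this gives a priori bounds, hence global existence.

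The next step is to show that for an appropriate range of $\lambda$ (or, equivalently, after using the scaling that links the equation at frequency $\omega$ to its massless counterpart) the solution stays in the first quadrant for small $r>0$ and decays to the origin, rather than spiraling or crossing an axis. Here the strategy of \cite{shooting} applies: rescale $u_{\lambda}(r)=\mu\,\tilde{u}(\mu r)$, $v_{\lambda}(r)=\mu\,\tilde{v}(\mu r)$ with $\mu\to+\infty$; the mass and frequency terms, being lower order in $\mu$, disappear in the limit and one recovers precisely the massless system \eqref{system}, whose solution given by Proposition \ref{asymptotic} is positive and converges to the origin. A continuous-dependence argument in the spirit of Lemma \ref{stability} then transfers positivity and the qualitative behaviour near the origin to the massive solution for $\mu$ large. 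The linearization of \eqref{massivesystem} at the origin has eigenvalues governed by $\sqrt{m^{2}-\omega^{2}}$, which is real and positive for $\omega\in(-m,m)$: the origin is a saddle, so there is a one-dimensional stable manifold, and the shooting/continuity argument in $\lambda$ selects the value landing on it, yielding exponential decay $|u_{\lambda}(r)|+|v_{\lambda}(r)|\lesssim e^{-\sqrt{m^{2}-\omega^{2}}\,r}$ as $r\to+\infty$.

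Concretely I would organize the proof as: (i) local/global existence and smoothness for \eqref{massivesystem} (verbatim adaptation of Lemmas \ref{existence} and the global-existence lemma, using $H_{m,\omega}$); (ii) the rescaling limit identifying \eqref{system} as the blow-up problem and the consequent positivity of $(u_{\lambda},v_{\lambda})$ on a maximal interval for suitable parameters; (iii) a shooting argument on $\lambda$, partitioning initial data according to whether the trajectory exits through $\{u=0\}$, through $\{v=0\}$, or is trapped — the trapped set being nonempty by a connectedness/intermediate-value argument — to produce a trajectory converging to the origin along the stable manifold; (iv) identification of the exponential rate via the spectral gap of $\cD_m-\omega$ and the saddle structure, and translation back to a smooth localized solution $\psi$ of \eqref{generalcase} of the form \eqref{form}.

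The main obstacle is step (iii): ruling out the possibility that every candidate trajectory eventually leaves the first quadrant or fails to approach the origin, i.e. showing the "good" set of shooting parameters is nonempty and that on it the $\omega$-limit set is exactly $\{(0,0)\}$ rather than a periodic orbit of the limiting Hamiltonian flow or a point on an axis. This is precisely where the explicit massless profile from Section \ref{dynamical} is indispensable: it provides the barrier/comparison functions needed to control the trajectory on the intermediate scale $1\ll r\ll 1/\mu$ before the exponential regime takes over, exactly as in \cite{shooting}, and it is the reason the present paper's analysis of the general nonlinearity $0<\beta_2\le\beta_1$ is what allows the extension beyond the pure-cubic case treated there.
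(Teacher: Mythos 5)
Your overall strategy --- local/global existence for \eqref{massivesystem}, a rescaling that recovers the massless system \eqref{system} as the blow-up limit, a shooting argument in $\lambda$, and exponential decay from the saddle structure at the origin --- is exactly the route the paper intends: its proof of Theorem \ref{massive} is a short deferral to the shooting argument of \cite{shooting}, and you correctly isolate the one genuinely new ingredient, namely that the asymptotics \eqref{decay} of the massless profile are what control the error between the rescaled massive trajectory and the limiting massless one on the intermediate range (which, incidentally, is $1\ll s\ll \mu$ in the rescaled variable $s=\mu r$, not $1\ll r\ll 1/\mu$).

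There is, however, a genuine sign error in your Hamiltonian. With the convention $\dot u=\partial_v H$, $\dot v=-\partial_u H$ used for \eqref{hamsyst}, the autonomous system obtained from \eqref{massivesystem} by dropping the term $u/r$ has Hamiltonian
\be
H_{m,\omega}(u,v)=H(u,v)-\frac{m-\omega}{2}\,v^{2}+\frac{m+\omega}{2}\,u^{2},
\ee
not $H+\frac{m-\omega}{2}v^{2}+\frac{m+\omega}{2}u^{2}$: differentiating your expression in $v$ would produce $+(m-\omega)v$ in the $\dot u$ equation, whereas \eqref{massivesystem} has $-(m-\omega)v$. Consequently $H_{m,\omega}$ is \emph{indefinite} near the origin (negative on the $v$-axis for small $|v|$), so your claim that it is ``positive definite with a strict minimum at the origin and compact sublevel sets near $0$'' is false, and it contradicts your own (correct) observation that the origin is a saddle with eigenvalues $\pm\sqrt{m^{2}-\omega^{2}}$ --- a strict minimum of the Hamiltonian would be a center. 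This is not cosmetic: the indefiniteness is precisely why the massive problem requires a shooting argument at all (generic trajectories cross an axis or approach periodic orbits and other equilibria of the limiting flow rather than $(0,0)$), and why convergence to the origin is not automatic from monotonicity of the energy as it essentially is in the massless case. Global existence still holds, but for a different reason: $H_{m,\omega}$ is coercive as $|(u,v)|\to\infty$ and nonincreasing along trajectories; note also that the correct derivative is $\dot H_{m,\omega}=-\frac{u^{2}}{r}\bigl(\beta_{1}u^{2}+2\beta_{2}v^{2}+m+\omega\bigr)\leq 0$ unconditionally, with a factor $u^{2}$ rather than $u$. Once the Hamiltonian is corrected and the confinement step is rerouted through coercivity, the remainder of your outline agrees with \cite{shooting} and with the proof the paper sketches.
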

\begin{remark}
Equation (\ref{generalcase}) is odd and thus there actually are two distinct solutions, due to this symmetry property. Notice that the mass term in (\ref{massiveoperator}) breaks the conformal invariance of the equation, so that the existence of infinitely many solutions does not follow authomatically in this case. This still is an open problem, to our knowledge.
\end{remark}
The theorem can be proved studying (\ref{massivesystem}) thanks to the shooting argument given in \cite{shooting}. The delicate part consists in controlling the error committed in approximating the solution with that of the limiting problem \eqref{system} on a suitable interval. However, thanks to the asymptotic behavior (\ref{decay}), the proof follows with minor modifications (see Section 2.2 and Appendix in \cite{shooting}). 

%%%%%%%%%%%%%%%%%%%%%%%%%%%%%%%%%%%%%%%%%
\bibliographystyle{siam}
\bibliography{MasslessDirac}

\end{document}